\documentclass[reqno]{amsart}
\usepackage{amssymb}
\usepackage{enumerate}
\usepackage[mathscr]{euscript}
\usepackage[hidelinks]{hyperref}
\usepackage{cleveref}
\usepackage{mathtools}
\usepackage{tikz-cd}

\numberwithin{equation}{section}

\newtheorem{thm}[equation]{Theorem}
\newtheorem*{thm*}{Theorem}
\newtheorem{prop}[equation]{Proposition}
\newtheorem{cor}[equation]{Corollary}

\theoremstyle{definition}
\newtheorem{defn}[equation]{Definition}
\newtheorem{ex}[equation]{Example}
\newtheorem{rec}[equation]{Recollection}
\newtheorem{rem}[equation]{Remark}
\newtheorem{cons}[equation]{Construction}
\newtheorem{summ}[equation]{Summary}
\newtheorem*{Ack}{Acknowledgements}

\newcommand{\nc}{\newcommand}
\nc{\dmo}{\DeclareMathOperator}

\nc{\gS}{\Sigma}

\nc{\bbZ}{\mathbb{Z}}

\nc{\mcO}{\mathcal{O}}

\nc{\rmb}{\mathrm{b}}
\nc{\rmc}{\mathrm{c}}
\nc{\rmD}{\mathrm{D}}
\nc{\rmK}{\mathrm{K}}
\nc{\rmL}{\mathrm{L}}
\nc{\rmR}{\mathrm{R}}
\nc{\rmS}{\mathrm{S}}

\nc{\scA}{\mathscr{A}}
\nc{\scC}{\mathscr{C}}
\nc{\scD}{\mathscr{D}}
\nc{\scK}{\mathscr{K}}
\nc{\scT}{\mathscr{T}}

\dmo{\Char}{char}
\dmo{\Coh}{Coh}
\dmo{\Free}{Free}
\dmo{\Hom}{Hom}
\dmo{\Id}{Id}
\dmo{\im}{Im}
\dmo{\Inj}{Inj}
\dmo{\Ker}{Ker}
\dmo{\Mod}{Mod}
\dmo{\noeth}{noeth}
\dmo{\QCoh}{QCoh}
\dmo{\res}{res}
\dmo{\smod}{mod}
\dmo{\sMod}{\ul{\Mod}}
\dmo{\Spec}{Spec}
\dmo{\Supp}{Supp}
\dmo{\thick}{thick}

\nc{\ol}{\overline}
\nc{\ot}{\otimes}
\nc{\ul}{\underline}
\nc{\wt}{\widetilde}
\nc{\xr}{\xrightarrow}

\nc{\rcolon}{\nobreak \mskip 6muplus1mu\mathpunct {}\nonscript \mkern -\thinmuskip {:}\mskip 2mu\relax}

\begin{document}
\title{The singularity category of a separable extension}
\author{Charalampos Verasdanis}
\address{Institute of Mathematics, Czech Academy of Sciences, \v{Z}itn\'a 25, 115 67 Prague, Czech Republic}
\email{verasdanis@math.cas.cz}
\date{}
\subjclass{18G80, 14F08}
\keywords{Singularity category, separable extension, \'etale morphism}

\begin{abstract}
We prove that separable extensions of noetherian rings and finite \'etale morphisms of noetherian schemes give rise to separable extensions of singularity categories.
\end{abstract}

\maketitle

\section{Introduction}\label{sec:introduction}

The theory of separable extensions in tensor-triangular geometry has received great interest in the past couple of decades. Initiated by Balmer~\cite{Balmer11,Balmer14} as a novel technique of constructing new triangulated categories and demonstrated in commutative algebra, it subsequently expanded to several equivariant contexts~\cite{BalmerDellAmbrogioSanders15}, to modular representation theory~\cite{Balmer15,BalmerCarlson18} and to algebraic geometry~\cite{Balmer16}. The general idea was to uniformly capture naturally arising constructions that were beyond the reach of the pre-existing theory (of localization for instance) by expressing them as categories of modules over certain monads by establishing monadicity of appropriate adjunctions. These include separable extensions of commutative rings, separated \'etale morphisms of quasi-compact quasi-separated schemes and restriction to subgroups, among others. More recent developments have also emerged in the theory of $\infty$-categories~\cite{NaumannPol24,Ramzi23}.

We expand the above list by incorporating two results concerning singularity categories (see~\Cref{rec:sing-recollement}) in analogy with the corresponding results~\cite[Theorem 6.5]{Balmer11} and~\cite[Theorem 3.5]{Balmer16} for derived categories.

\begin{thm*}[{\ref{thm:singularity-separable}}]
Let $R$ be a commutative noetherian ring and let $A$ be a separable flat $R$-algebra that is a noetherian ring (not necessarily finitely generated over $R$). There is an equivalence $E_S\colon \rmS(A)\xr{\simeq} \Mod_{\rmS(R)}A$ of triangulated categories between the singularity category of $A$ and the category of modules in $\rmS(R)$ over the monad $A\ot_R-\colon \rmS(R)\to \rmS(R)$.
\end{thm*}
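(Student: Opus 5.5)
The plan is to deduce the statement from Balmer's theorem for derived categories, \cite[Theorem 6.5]{Balmer11}, together with the description of the singularity category as a Verdier quotient $\rmS(R)=\rmD^\rmb(\smod R)/\rmK^\rmb(\mathrm{proj}\,R)$. We may equally use Krause's compactly generated model $\rmS(R)\simeq \rmK_{\mathrm{ac}}(\Inj R)^\rmc$, via the recollement in \Cref{rec:sing-recollement}.

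\emph{Step 1: the monad and the comparison functor.} Since $A$ is flat over $R$, the functor $A\ot_R-$ is exact. The forgetful functor $\res\colon \Mod A\to\Mod R$ is also exact. Both preserve injective complexes: restriction does because $A$ is flat, so $\Hom_R(-,I)\cong\Hom_A(A\ot_R-,I)$. The functor $A\ot_R-$ sends injectives to injectives because $R$ and $A$ are noetherian and injectives are summands of products, or via the separability splitting. Both functors preserve acyclicity, so we obtain an adjunction $F=A\ot_R-\dashv U=\res$ between $\rmK_{\mathrm{ac}}(\Inj R)$ and $\rmK_{\mathrm{ac}}(\Inj A)$. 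It is compatible with the recollements of \Cref{rec:sing-recollement}. By separability, the counit $FU\to\Id$ admits a natural section, coming from the $A$-bimodule splitting $\sigma\colon A\to A\ot_R A$ of multiplication. The monad $UF=A\ot_R-$ is then a separable monad on $\rmK_{\mathrm{ac}}(\Inj R)$, and the Eilenberg--Moore comparison functor $E$ is defined.

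\emph{Step 2: restriction to compacts.} We must check that $F$ and $U$ preserve compact objects, i.e.\ that they induce functors between $\rmS(R)$ and $\rmS(A)$. This is where the phrase ``not necessarily finitely generated'' matters: $U$ need not send $\rmD^\rmb(\smod A)$ into $\rmD^\rmb(\smod R)$. So I would not work with finitely generated modules directly. Instead, I would work in the big categories $\rmK_{\mathrm{ac}}(\Inj-)$ and then pass to compacts, using that a separable monad on an idempotent complete triangulated category has a triangulated module category. This is Balmer's result: $\Mod_{\scT}\mathbb{A}$ is triangulated when $\mathbb{A}$ is separable and $\scT$ is idempotent complete. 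If $U$ fails to preserve compacts, I would instead define the monad on $\rmS(R)$ by $A\ot_R-$ directly. On $\rmD^\rmb(\smod R)$ this makes sense whenever $A\ot_R M$ is bounded with finitely generated homology over $R$, which again may fail. I expect this to be the main obstacle: identifying in which category the monad $A\ot_R-\colon\rmS(R)\to\rmS(R)$ really lives, and showing that its modules correspond exactly to compacts of $\rmK_{\mathrm{ac}}(\Inj A)$. I would try to show that an $A$-module structure on $X\in\rmS(R)$ exhibits $X$ as a summand of $A\ot_R X$, which is the image of a compact of $\rmS(A)$, and so lands in compacts.

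\emph{Step 3: equivalence.} Having a separable adjunction $F\dashv U$ with $U$ faithful and conservative on the relevant subcategories, Balmer's monadicity criterion applies \cite[Theorem 3.6 / 6.5]{Balmer11}. In an idempotent complete setting, every module over the separable monad is a retract of a free module $UF X$, and the comparison functor is fully faithful on free modules by adjunction. Hence $E_S$ is an equivalence as soon as $\rmS(A)$ is idempotent complete, or after replacing both sides by idempotent completions. Since $\rmS(A)$ need not be idempotent complete, I would show the essential image is closed under the retracts in question. The splitting idempotent on $FX$ induced by a module structure on $X$ is split already in $\rmS(A)$, because it is the image of the canonical idempotent $\sigma$-projector on $A\ot_R-$, which is split at the level of $A$-modules, i.e.\ before passing to the quotient. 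This yields the equivalence $E_S\colon\rmS(A)\simeq\Mod_{\rmS(R)}A$ of triangulated categories.
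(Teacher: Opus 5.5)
Your proposal targets the wrong category. In this paper $\rmS(R)=\rmK_{\mathrm{ac}}(\Inj R)$ is the \emph{big}, compactly generated singularity category (\Cref{rec:sing-recollement}, \Cref{rem:abelian-sing-cats}). It is not $\rmD^\rmb(\smod R)/\rmK^\rmb(\mathrm{proj}\,R)$, and it is not $\rmK_{\mathrm{ac}}(\Inj R)^\rmc$. Those two are not interchangeable either, since the latter is $\rmD_{\mathrm{sg}}(R)^\natural$. Steps 2 and 3 therefore pursue a different statement, and that statement is not even well posed at this generality. As you note, $A\ot_R-$ does not preserve $\rmD^\rmb(\smod R)$ when $A$ is not finitely generated. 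It preserves compacts of $\rmK_{\mathrm{ac}}(\Inj R)$ precisely when $\res\colon\rmS(A)\to\rmS(R)$ does, and this need not happen. You leave this ``main obstacle'' unresolved. The idempotent argument in Step 3 also fails in a Verdier quotient. The idempotent on $A\ot_R X$ is built from the action $\alpha\colon A\ot_R X\to X$, which is an arbitrary morphism of the quotient rather than a map of complexes of $A$-modules. So there is no reason for it to split `before passing to the quotient'. As written, the proposal does not reach a proof.

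With the paper's definition, drop Steps 2 and 3. Step 1 gives an adjunction $A\ot_R-\dashv\res$ between $\rmK_{\mathrm{ac}}(\Inj R)$ and $\rmK_{\mathrm{ac}}(\Inj A)$ that realizes the monad, and its counit has a degreewise section. Since $\rmS(A)$ is compactly generated, hence idempotent complete, \Cref{thm:modules-triangulated} finishes the proof.

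However, the load-bearing claim of Step 1 is that $A\ot_R I$ is $A$-injective for every injective $R$-module $I$, and what you wrote does not justify it. The reasoning via `summands of products' does not help, because $A\ot_R-$ does not commute with products unless $A$ is finitely presented. A correct argument runs as follows. By Lazard, $A\cong\varinjlim R^{n_i}$, so $A\ot_R I\cong\varinjlim I^{n_i}$, which is $R$-injective because $R$ is noetherian. Next, if $\sum_i a_i\ot b_i$ is a separability idempotent, then $\phi\mapsto\sum_i a_i\phi(b_i)$ is an $A$-linear retraction of $M\to\Hom_R(A,M)$. Hence every $A$-module $M$ that is $R$-injective is a summand of the $A$-injective module $\Hom_R(A,M)$.

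The paper avoids this lemma. It uses the left adjoint $\lambda_A(A\ot_R-)$, where $\lambda_A\colon\rmK(A)\to\rmS(A)$ is left adjoint to the inclusion. It proves $\res\lambda_A\cong\lambda_R\res$ by passing to right adjoints, using that $\Hom_R(A,-)$ preserves acyclic complexes of injectives. From this it deduces $\res\lambda_A(A\ot_R E)\cong A\ot_R E$, so the adjunction realizes the monad, and it concludes from faithfulness of $\res$ via \Cref{summ:summary}. Once you supply the lemma, your route avoids both $\lambda_A$ and the acyclicity result for $\Hom_R(A,-)$. It shows that the simplification recorded in \Cref{rem:singularity-etale-algebra} for \'etale $A$ is available for every flat separable $A$ over a noetherian $R$.
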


\begin{thm*}[{\Cref{thm:singularity-etale-schemes}, \Cref{cor:small-singularity-etale-schemes}}]
Let $f\colon U\to X$ be a finite \'etale morphism of noetherian schemes. There is an equivalence $E\colon \rmS(U)\xr{\simeq} \Mod_{\rmS(X)}\rmR f_\ast \mcO_U$ of triangulated categories between the singularity category of $U$ and the category of modules in $\rmS(X)$ over the monad $\rmR f_\ast \mcO_U \ot_X -\colon \rmS(X)\to \rmS(X)$. The equivalence $E$ yields an equivalence $E_{\mathrm{sg}}\colon \rmD_{\mathrm{sg}}(U)^\natural\xr{\simeq} \Mod_{\rmD_{\mathrm{sg}}(X)^\natural} \rmR f_\ast \mcO_U$.
\end{thm*}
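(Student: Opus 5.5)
The plan is to deduce the statement from Balmer's monadicity criterion for separable monads~\cite{Balmer11}. That criterion says the following. Let $G\colon \scD\to\scC$ be an exact functor between triangulated categories with a left adjoint $F$, and suppose the counit $FG\to\Id_{\scD}$ admits a natural section, i.e.\ $G$ is separable in Rafael's sense. If moreover $\scD$ is idempotent complete, then the Eilenberg--Moore comparison functor $\scD\to \Mod_{\scC}GF$ is an equivalence, and $\Mod_{\scC}GF$ is triangulated. I will take $\scD=\rmS(U)$, $\scC=\rmS(X)$, $F=f^\ast$ and $G=f_\ast$. Then I will identify the monad $f_\ast f^\ast$ with $\rmR f_\ast\mcO_U\ot_X-$.

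\textbf{Step 1: the functors descend to $\rmS$.} Since $f$ is finite, it is affine, so $f_\ast$ is exact on $\QCoh$. Since $f$ is flat, $f^\ast$ is exact. Hence $f_\ast$ has the exact left adjoint $f^\ast$, and so $f_\ast$ preserves injective quasi-coherent sheaves. Since $f$ is finite étale, the right adjoint $f^\flat$ of $f_\ast$ on $\QCoh$ agrees with $f^\ast$. Thus $f^\ast$ is also right adjoint to an exact functor, and it preserves injectives as well. Both functors are exact and commute with arbitrary coproducts, because $X$ and $U$ are noetherian. Applied degreewise, they therefore give functors between $\rmK(\Inj X)$ and $\rmK(\Inj U)$ that preserve acyclic complexes. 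This produces an ambidextrous adjunction $f^\ast\dashv f_\ast\dashv f^\ast$ between $\rmS(X)$ and $\rmS(U)$. These functors are compatible with the recollement of \Cref{rec:sing-recollement}.

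\textbf{Step 2: identifying the monad.} I want a natural isomorphism $f_\ast f^\ast(I)\cong f_\ast\mcO_U\ot_X I$ for $I\in\rmS(X)$. Here $\ot_X$ is the action of $\rmD(X)$ on $\rmS(X)$ (equivalently on $\rmK(\Inj X)$). Degreewise, this is the projection formula $f_\ast f^\ast I^n\cong f_\ast\mcO_U\ot I^n$. The right-hand side is injective because $f_\ast\mcO_U$ is a locally free $\mcO_X$-module of finite rank. Since $\rmR f_\ast\mcO_U=f_\ast\mcO_U$ is a flat vector bundle, the derived action reduces to the degreewise tensor product. I must also check that the multiplication and unit of the monad match those induced by the $\mcO_X$-algebra structure of $f_\ast\mcO_U$. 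This is a routine verification on degreewise formulas.

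\textbf{Step 3: separability and idempotent completeness.} Because $f$ is finite étale, $f_\ast\mcO_U$ is a separable $\mcO_X$-algebra. So the multiplication $f_\ast\mcO_U\ot f_\ast\mcO_U\to f_\ast\mcO_U$ has a bimodule section $\sigma$. Using Step~2, $\sigma$ yields a natural section of the monad multiplication, and from it a natural section of the counit $f^\ast f_\ast\to\Id$; equivalently, one can observe that the corresponding map on $\QCoh(U)$ splits and extend degreewise. The category $\rmS(U)$ has arbitrary coproducts, so it is idempotent complete. Balmer's theorem then gives the equivalence $E$.

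\textbf{Step 4: the small version.} $\rmS(X)$ is compactly generated with $\rmS(X)^{\rmc}\simeq \rmD_{\mathrm{sg}}(X)^\natural$ (Krause). Both $f^\ast$ and $f_\ast$ have coproduct-preserving right adjoints, so both preserve compact objects. Hence the adjunction, the monad and the counit section all restrict to compact objects. Since $\rmD_{\mathrm{sg}}(U)^\natural$ is idempotent complete, Balmer's theorem applies again and gives $E_{\mathrm{sg}}$. The step I expect to be the main obstacle is Step~2, together with the compatibility of structures. Specifically, I must make the $\rmD(X)$-action on $\rmS(X)$ precise enough that $f_\ast f^\ast$ is isomorphic to $\rmR f_\ast\mcO_U\ot_X-$ \emph{as monads}. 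I must also check that $f^\ast$ preserves injectives, which I would first try to settle via $f^\flat\cong f^\ast$ for finite étale $f$.
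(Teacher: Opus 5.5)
Your proposal is correct and follows essentially the same route as the paper. Both use the degreewise adjunction $f^\ast\dashv f_\ast$ on $\rmS(-)$, with $f^\ast$ preserving injectives because it is also right adjoint to the exact $f_\ast$; both identify the monad with $f_\ast\mcO_U\ot_X-$ via the projection formula, and both take a degreewise section of the counit from the sheaf level. The only difference in the main theorem is that the paper passes to faithfulness of $f_\ast$ and uses the Dell'Ambrogio--Sanders criterion, whereas you use Balmer's counit-section criterion directly.

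For $E_{\mathrm{sg}}$ you reapply the criterion to the compact parts, where the paper instead cites Balmer's description of compacts in module categories. In either version you need the compatibility with the recollement that you assert without proof in Step~1. It is what matches the restricted monad on $\rmS(X)^\rmc$ with $f_\ast\mcO_U\ot_X-$ on $\rmD_{\mathrm{sg}}(X)^\natural$ under Krause's equivalence. The paper supplies this by citing Stevenson's lemma that $f_\ast$ and $f^\ast$ commute with the stabilization functors.
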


\begin{Ack}
The author thanks Michal Hrbek and Greg Stevenson for their comments. This research was supported by the project LQ100192601 Lumina quaeruntur, funded by the Czech Academy of Sciences (RVO 67985840).
\end{Ack}

\section{Monads and modules}\label{sec:monads-and-modules}

We recall some definitions and results involving monads and adjunctions and refer the reader to the articles~\cite{Balmer11,Balmer16} and references therein for more details. For the basic theory of monads, we recommend~\cite{Riehl16}. Let $\scC$ be a triangulated category.

\begin{defn}
\label[defn]{defn:monad}
A \emph{triangulated monad} on $\scC$ is a triple $M=(M,\mu,\eta)$ that consists of a triangulated functor $M\colon \scC\to \scC$ and natural transformations $\mu\colon M^2 \to M$ (called \emph{multiplication}) and $\eta\colon \Id_\scC\to M$ (called the \emph{unit}) that commute with suspension such that the following diagrams commute:
\[
\begin{tikzcd}[row sep=3em]
M^3 \rar["M\mu"] \dar["\mu_M"'] & M^2 \dar["\mu"]
\\
M^2 \rar["\mu"] & M
\end{tikzcd}
\qquad \& \qquad
\begin{tikzcd}[row sep=3em]
M \rar["M\eta"] \drar[equal] & M^2 \dar["\mu"description] & \lar["\eta_M"'] M \dlar[equal]
\\
& M.
\end{tikzcd}
\]
\end{defn}

\begin{defn}
\label[defn]{defn:module}
An $M$-\emph{module} is a pair $(X,\alpha)$ that consists of an object $X\in \scC$ and a morphism $\alpha \colon M(X)\to X$ (called the \emph{action} of $M$ on $X$) such that the following diagrams commute:
\[
\begin{tikzcd}[row sep=3em]
M^2(X) \rar["M\alpha"]  \dar["\mu_X"'] & M(X) \dar["\alpha"]
\\
M(X) \rar["\alpha"] & X
\end{tikzcd}
\qquad \& \qquad
\begin{tikzcd}[row sep=3em]
X \rar["\eta_X"] \drar[equal] & M(X) \dar["\alpha"]
\\
& X.
\end{tikzcd}
\]
A \emph{morphism} of $M$-modules $f\colon (X,\alpha)\to (Y,\beta)$ is a morphism $f\colon X\to Y$ that commutes with the actions, meaning that the square below commutes:
\[
\begin{tikzcd}[row sep=3em]
M(X) \rar["\alpha"] \dar["Mf"']& X \dar["f"]
\\
M(Y) \rar["\beta"] & Y.
\end{tikzcd}
\]
\end{defn}

The category of $M$-modules (also known as the category of $M$-algebras or the Eilenberg--Moore category) is denoted by $\Mod_\scC M$ and it is an additive category that inherits a suspension functor $\gS\colon \Mod_\scC M \to \Mod_\scC M$ in the obvious way. In general, $\Mod_\scC M$ does not inherit the structure of a triangulated category from that of $\scC$; it does when $M$ is separable and $\scC$ is idempotent complete. Before we discuss this, let us recall some basic facts about adjunctions and monads.

\begin{rec}
\label[rec]{rec:adj-real}
Given an adjunction $\begin{tikzcd}[cramped,column sep=1em] F\colon\scC \rar[shift left] & \scD \rcolon G \lar[shift left]\end{tikzcd}$ with unit $\eta\colon \Id_\scC \to GF$ and counit $\varepsilon\colon FG\to \Id_\scD$, the triple $(GF,G\varepsilon F,\eta)$ is a monad on $\scC$. Given a monad $(M,\mu,\eta)$ on $\scC$, we say that the adjunction $F\dashv G$ \emph{realizes} $M$ if $GF=M$, the unit of $F\dashv G$ coincides with that of $M$ and $G\varepsilon F=\mu$.
\end{rec}

\begin{rec}
\label[rec]{rec:free-forgetful-adj}
Given an object $X\in \scC$, there is an associated \emph{free} $M$-module defined by $(M(X),\mu_X)$. This assignment defines a functor $F_M\colon \scC\to \Mod_\scC M$ with right adjoint $U_M\colon \Mod_\scC M\to \scC$ the \emph{forgetful functor} that sends an $M$-module $(X,\alpha)$ to $X\in \scC$. Clearly, $U_MF_M=M$. The unit of the adjunction $F_M\dashv U_M$ is the unit $\eta$ of the monad $M$ and the counit $\varepsilon_M\colon F_MU_M\to \Id_{\Mod_\scC M}$ has components ${\varepsilon_M}_{(X,\alpha)}=\alpha\colon (M(X),\mu_X)\to (X,\alpha)$. The functors $F_M$ and $U_M$ are additive and commute with suspension. Further, $U_M\varepsilon_M F_M=\mu$, so the adjunction
\[
\begin{tikzcd}
\scC \rar[shift left=5pt,"F_M"] \rar[phantom,"\perp"]& \lar[shift left=5pt,"U_M"] \Mod_\scC M,
\end{tikzcd}
\]
called the \emph{Eilenberg--Moore adjunction} (or the \emph{free-forgetful adjunction}) realizes $M$ and satisfies the following universal property: If
\[
\begin{tikzcd}
\scC \rar[shift left=5pt,"F"] \rar[phantom,"\perp"]& \lar[shift left=5pt,"G"] \scD,
\end{tikzcd}
\quad
\eta\colon \Id_\scC \to GF,
\quad
\varepsilon\colon FG\to \Id_\scD
\]
is an adjunction that realizes $M$, then there is a unique functor $E\colon \scD \to \Mod_\scC M$ satisfying $EF=F_M$ and $U_ME=G$. The functor $E$ sends an object $Y\in \scD$ to the $M$-module $(G(Y),MG(Y)=GFG(Y)\xr{G\varepsilon_Y} G(Y))$ and we refer to $E$ as the \emph{comparison functor}. If the functors $F$ and $G$ are additive and commute with suspension, then $E$ is also an additive functor that commutes with suspension. The image of $F_M$ in $\Mod_\scC M$ forms the subcategory $\Free_\scC M$ of free $M$-modules (also known as the Kleisli category). The free-forgetful adjunction between $\scC$ and $\Free_\scC M$ satisfies an analogous universal property: it is initial among the adjunctions realizing $M$; see the references given at the beginning of the section for details.
\end{rec}

Next we discuss separability of a monad and some of its consequences. Separability is a key property under which the associated category of modules exhibits compatibility with respect to the triangulated structure and, as we will see in the sequel, the tensor structure.

\begin{defn}
\label[defn]{defn:separable-monad}
A triangulated monad $M\colon \scC\to \scC$ is called \emph{separable} if the multiplication $\mu\colon M^2 \to M$ admits a section $\sigma \colon M\to M^2$, meaning that $\mu \circ \sigma=\Id_{M}$ (that we assume commutes with suspension) and $M\mu \circ \sigma_M=\sigma \circ \mu=\mu_M\circ M\sigma$.
\end{defn}

\begin{thm}[{\cite[Main Theorem 5.17]{Balmer11}}]
\label[thm]{thm:modules-triangulated}
With notation as in~\Cref{rec:free-forgetful-adj}, when $M$ is separable and $\scC$ is idempotent complete, the category $\Mod_\scC M$ inherits a triangulated structure such that the functors $F_M$ and $U_M$ are triangulated functors. The triangles of $\Mod_\scC M$ are precisely those triangles of modules whose underlying diagram is a triangle in $\scC$. Further, there is an equivalence $(\Free_\scC M)^\natural\simeq \Mod_\scC M$. If $F\dashv G$ is an adjunction of triangulated functors that realizes $M$, then the comparison functor $E\colon \scD\to \Mod_\scC M$ is a triangulated functor. If, moreover, $\scD$ is idempotent complete and the counit of the $F\dashv G$ adjunction has a section, then $E$ is an equivalence of triangulated categories.
\end{thm}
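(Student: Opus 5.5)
The theorem is Balmer's, and the plan follows his approach in the order the statement is listed: triangulated structure on $\Mod_\scC M$, then $(\Free_\scC M)^\natural\simeq\Mod_\scC M$, then the comparison functor.

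\textbf{Step 1: every module is a retract of a free one.} Let $(X,\alpha)$ be a module. The action $\alpha\colon (M(X),\mu_X)\to(X,\alpha)$ is a module map, and I claim it has a module section $\beta:=\alpha_{M}\circ\ldots$; concretely, $\beta:=M\alpha\circ\sigma_X\circ\eta_X\colon X\to M(X)$. One checks $\alpha\beta=\Id_X$ using $\mu\sigma=\Id$, the module axioms and naturality of $\eta$. The compatibility $\mu_X\circ M\beta=\beta\circ\alpha$ comes from the identities $M\mu\circ\sigma_M=\sigma\circ\mu=\mu_M\circ M\sigma$. So every module is a direct summand of a free module. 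This is also where $\Mod_\scC M$ is seen to be idempotent complete once $\scC$ is: split idempotents in $\scC$ and transport the action.

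\textbf{Step 2: triangles in $\Mod_\scC M$.} Declare a triangle of modules distinguished when it is a retract of $F_M$ of a distinguished triangle of $\scC$. I expect this to coincide with "underlying triangle distinguished". The real work is to show that every morphism $f\colon(X,\alpha)\to(Y,\beta)$ of modules has a cone. Take the cone $Z$ of $U_Mf$ in $\scC$. Then $M$ applied to the triangle is a triangle, and $Z$ should be cut out of $\mathrm{cone}(M f)$ by an idempotent built from the sections in Step 1. Splitting that idempotent, which uses idempotent completeness, produces a module structure on $Z$ fitting into a triangle of modules. The octahedral axiom and the other axioms are then transported along retractions from $\scC$. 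Doing this consistently is Balmer's main technical lemma, and \textbf{this is the main obstacle}. I would follow his strategy: first prove that the Kleisli category $\Free_\scC M$ is pre-triangulated via $F_M$, then invoke Balmer's theorem that the idempotent completion of a pre-triangulated category is pre-triangulated. Steps 1 and 2 together identify that completion with $\Mod_\scC M$, giving $(\Free_\scC M)^\natural\simeq\Mod_\scC M$. With this structure, $F_M$ and $U_M$ are triangulated by construction.

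\textbf{Step 3: the comparison functor.} Since $E$ commutes with suspension and $U_M E=G$ is triangulated, $E$ sends distinguished triangles to triangles of modules whose underlying triangles are distinguished; hence $E$ is triangulated.

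For the equivalence, suppose the counit $\varepsilon$ has a section $\xi$ and $\scD$ is idempotent complete.
\begin{enumerate}[(i)]
\item Every $Y\in\scD$ is then a retract of $FG(Y)$.
\item $E$ restricted to objects $F(X)$ is fully faithful, since
\[
\Hom_\scD(FX,FX')\cong\Hom_\scC(X,MX')\cong\Hom_{\Mod M}(F_MX,F_MX').
\]
\item Full faithfulness extends to retracts of such objects, so by (i) $E$ is fully faithful on all of $\scD$.
\item For essential surjectivity: by Step 1 every module is a retract of some $F_MX=EF(X)$. The corresponding idempotent lifts, by fullness, to an idempotent on $F(X)$, which splits in $\scD$.
\end{enumerate}
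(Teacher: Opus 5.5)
Your Steps 1 and 3 are sound. The paper itself only cites Balmer here, so the benchmark is Balmer's argument. Write $s_X=M\alpha\circ\sigma_X\circ\eta_X$ for your section. It is a module map, natural in module morphisms and compatible with $\gS$, which also shows at once that ``retract of $F_M$ of a distinguished triangle'' and ``underlying triangle distinguished'' agree. Your retract argument for $E$ is correct.

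The genuine gap is Step 2. The route you commit to, pre-triangulating $\Free_\scC M$ via $F_M$ and then applying Balmer--Schlichting, cannot work. The Kleisli category is in general not closed under cones: a cone of a map of free modules is only a retract of a free module. For example, take $\scC=\rmD^\rmb(\smod k)$ with $k$ a field, and $M=(k\times k)\ot_k-$. Then $\Mod_\scC M\simeq\rmD^\rmb(\smod k)\times\rmD^\rmb(\smod k)$ and the free modules are the pairs $(V,V)$. The endomorphism $(\Id,0)$ of $F_Mk=(k,k)$ has cone $(0,k\oplus\gS k)$, which is not free. In fact no pre-triangulation of $\Free_\scC M$ compatible with $F_M$ exists. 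Indeed, $\dim_k\Hom(F_Mk,F_MX)=2\dim_k H^0(X)$ is always even, whereas the long exact sequence would force $\dim_k\Hom(F_Mk,C)=1$ for a cone $C$ of $(\Id,0)$. So cones must be built inside $\Mod_\scC M$. Your first sketch of that stops exactly at the hard point: a TR3 fill-in on the third vertex is neither a module map nor idempotent.

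The missing idea is the following. For $f\colon(X,\alpha)\to(Y,\beta)$, take a distinguished triangle $\Delta\colon X\xr{f}Y\xr{g}Z\xr{h}\gS X$ in $\scC$ and put $p_X=s_X\alpha$, $p_Y=s_Y\beta$.

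\begin{enumerate}[(1)]
\item Since $s_Yf=Mf\circ s_X$, a TR3 fill-in for $(s_X,s_Y)\colon\Delta\to M\Delta$ transposes along $F_M\dashv U_M$ to a \emph{module} endomorphism $c$ of $F_MZ$. With it, $(p_X,p_Y,c)$ is an endomorphism of $F_M\Delta$.
\item Then $(0,0,c^2-c)$ is also an endomorphism, so $c^2-c$ kills $F_Mg$. By adjunction and exactness of $\Hom_\scC(-,MZ)$ on $\Delta$, it factors through $F_Mh$, while $F_Mh\circ(c^2-c)=0$.
\item Hence $(c^2-c)^2=0$, and $c'=3c^2-2c^3$ is an idempotent with $(p_X,p_Y,c')$ still an endomorphism of $F_M\Delta$.
\item Splitting it in the idempotent complete $\Mod_\scC M$ gives a triangle of modules on $f$. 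Its underlying triangle is a summand of $M\Delta$, hence distinguished.
\end{enumerate}

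TR3 then follows by averaging a fill-in $\phi\colon Z\to Z'$ to $\gamma'\circ M\phi\circ s_Z$, where $\gamma'$ is the action on $Z'$. This is where $\sigma$ commuting with $\gS$ is used.

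Finally, the octahedral axiom does not simply ``transport along retractions''. You would need idempotent endomorphisms of entire octahedra, which TR3 does not supply. Balmer obtains a pre-triangulation in general, and the octahedron only under an extra hypothesis on $\scC$ (a $3$-triangulation, e.g.\ coming from an enhancement). That hypothesis is satisfied in all the situations of this paper.
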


The last part of~\Cref{thm:modules-triangulated}, regarding when the comparison functor is an equivalence, has undergone a very useful improvement: It is enough to know that the right adjoint $G$ is conservative. The precise statement goes as follows.

\begin{thm}[{\cite{DellAmbrogioSanders18}}]
\label[thm]{rem:comparison-equivalence-no-sep}
Let $\scC$ and $\scD$ be idempotent complete triangulated categories and $F\dashv G$ an adjunction of triangulated functors that realizes a triangulated monad $M\colon \scC\to \scC$. If $\Mod_\scC M$ admits a triangulated structure that is compatible with $F_M$ and $U_M$ and $G$ is conservative, then the comparison functor $E\colon \scD\to \Mod_\scC M$ is an equivalence, which does not necessarily preserve the triangulated structures. If the triangles of $\Mod_\scC M$ are precisely those whose underlying diagram is a triangle in $\scC$, then $E$ is a triangulated functor.
\end{thm}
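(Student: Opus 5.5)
The plan is to show that the comparison functor $E\colon \scD\to \Mod_\scC M$ is fully faithful and essentially surjective. For both steps I use the equivalence $(\Free_\scC M)^\natural\simeq \Mod_\scC M$, which is granted by the hypothesis on the triangulated structure, together with the facts that $EF=F_M$ and $U_ME=G$. The key observation is that every $M$-module $(X,\alpha)$ is a retract of the free module $F_M(X)$. Indeed, in any idempotent complete triangulated $\Mod_\scC M$ compatible with $F_M,U_M$, the counit $\alpha\colon F_MU_M(X,\alpha)\to (X,\alpha)$ is split by $\eta_X$ after applying $U_M$. I will argue more conceptually below, but this is the mechanism.

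\emph{Step 1: $E$ is fully faithful on the image of $F$.} For $X,X'\in\scC$ we have
\[
\Hom_\scD(FX,FX')\cong \Hom_\scC(X,GFX')=\Hom_\scC(X,MX')\cong \Hom_{\Mod M}(F_MX,F_MX').
\]
One checks that this composite bijection is exactly the map induced by $E$, using $EF=F_M$ and the fact that both adjunctions realize $M$. So $E$ restricts to an equivalence from the full subcategory $F(\scC)$ onto $\Free_\scC M$. Since $\scD$ is idempotent complete, $E$ extends to a fully faithful functor from $\thick$-free idempotent completions, namely from the retract closure of $F(\scC)$ in $\scD$ onto the retract closure of $\Free_\scC M$. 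By the hypothesis this retract closure is all of $\Mod_\scC M$, which gives essential surjectivity once Step 2 identifies the retract closure on the $\scD$ side.

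\emph{Step 2: every $Y\in\scD$ is a retract of some $FX$.} Here conservativity of $G$ enters, and this is the main obstacle, since no section of the counit is assumed. I would complete the counit $\varepsilon_Y\colon FGY\to Y$ to a triangle
\[
FGY\to Y\xr{\,g\,} Z\to \gS FGY.
\]
Applying $G$, the map $G\varepsilon_Y$ is split by $\eta_{GY}$, so $Gg=0$. Then $Gg$ corresponds under adjunction to something that should force $g$ to factor compatibly. The cleanest route I would try is to use conservativity in the form of faithfulness on the relevant maps: show that the map $Z\to \gS FGY$ becomes a split monomorphism after $G$. Consequently the cone $W$ of $g$ satisfies that $G$ applied to $Z\to W$ is split. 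Iterating this, or using that a triangulated conservative functor reflects zero objects, should show that $g=0$. Conservativity of the triangulated functor $G$ means $G(\mathrm{cone}\, g)\cong G Y[1]\oplus GZ$ detects splitting, and Dell'Ambrogio--Sanders' trick is exactly that $g=0$ follows. Once $g=0$, the counit $\varepsilon_Y$ is a split epimorphism, so $Y$ is a retract of $FGY$.

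Finally, for full faithfulness on all of $\scD$, I pass to retracts. Both sides are idempotent complete, $E$ is additive, and every object on either side is a retract of an object in the image of $F$ (respectively $F_M$). A fully faithful additive functor between such subcategories stays fully faithful on retracts, so $E$ is an equivalence. In the last assertion, if the triangles of $\Mod_\scC M$ are detected by $U_M$, then $E$ sends a triangle $\Delta$ in $\scD$ to a diagram whose image under $U_M$ is $G\Delta$. This is a triangle because $G$ is triangulated, so $E$ is a triangulated functor.
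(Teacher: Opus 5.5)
Your Step 2 is where the proof breaks, and it is the heart of the statement. From the triangle $FGY\xr{\varepsilon_Y}Y\xr{g}Z\to\gS FGY$ you correctly get $Gg=0$, but nothing lets you conclude $g=0$. A conservative triangulated functor reflects isomorphisms and zero objects, not zero morphisms, and it need not be faithful. The observation $G(\mathrm{cone}\,g)\cong \gS GY\oplus GZ$ does not help either. Conservativity can only be applied to a morphism of $\scD$, and you have no morphism in $\scD$ between $\mathrm{cone}(g)$ and $\gS Y\oplus Z$; producing one amounts to the splitting you are trying to prove. The phrases ``iterating this'' and ``Dell'Ambrogio--Sanders' trick is exactly that $g=0$ follows'' are not an argument.

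There is a related gap in your Step 1. The equivalence $(\Free_\scC M)^\natural\simeq\Mod_\scC M$ is \emph{not} granted by the hypotheses. It comes from Balmer's theorem for separable $M$, and separability is not assumed here. It is true, but it needs exactly the triangle argument you tried to run in $\scD$, now run in $\Mod_\scC M$. Complete $\alpha\colon F_MX\to(X,\alpha)$ to a triangle $N\to F_MX\xr{\alpha}(X,\alpha)\xr{\partial}\gS N$. Exactness of $U_M$ and the section $\eta_X$ of $U_M\alpha$ give $U_M\partial=0$. Since $U_M$ is \emph{faithful}, $\partial=0$. The gap between a faithful $U_M$ and a merely conservative $G$ is precisely why this cannot be transplanted to $\scD$.

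The fix is to apply conservativity to a morphism manufactured on the module side.
\begin{enumerate}[\rm(i)]
\item Strengthen Step 1. $E$ induces a bijection $\Hom_\scD(FX,Y)\to\Hom_{\Mod_\scC M}(F_MX,EY)$ for all $X\in\scC$ and \emph{all} $Y\in\scD$. Both sides are identified with $\Hom_\scC(X,GY)$ via $U_ME=G$ and the common unit. Hence $E$ is bijective on $\Hom_\scD(Y',Y)$ whenever $Y'$ is a retract of some $FX$.
\item For $Y\in\scD$ one has $E\varepsilon_Y=\varepsilon^M_{EY}$, the action map of $EY$. By the module-side argument it has a section $s$.
\item By (i), the idempotent $s\circ E\varepsilon_Y$ of $EFGY$ lifts to an idempotent of $FGY$. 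This splits in $\scD$ as $FGY\xr{p}Y'\xr{i}FGY$.
\item $E(\varepsilon_Y\circ i)$ is an isomorphism, with inverse $E(p)\circ s$. Hence $G(\varepsilon_Y\circ i)=U_ME(\varepsilon_Y\circ i)$ is an isomorphism, and conservativity gives $Y\cong Y'$.
\end{enumerate}
Only now is every object of $\scD$ a retract of some $FX$, and your closing retract argument goes through. Your treatment of the last assertion, that $E$ is triangulated when triangles are detected by $U_M$, is fine.
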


The final result we need to recall is Balmer's ``separable Neeman--Thomason theorem".

\begin{thm}[{\cite[Theorem 4.2]{Balmer16}}]
\label[thm]{thm:modules-cg}
If $\scC$ is a compactly generated triangulated category and $M\colon \scC \to \scC$ is a separable triangulated monad that preserves coproducts, then $\Mod_\scC M$ is also compactly generated. Its subcategory of compact objects is $(\Mod_\scC M)^\rmc=\thick(F_M(\scC^\rmc))$ (note that since $U_M$ preserves coproducts, $F_M$ preserves compact objects and thus, the preceding equality makes sense). If $M$ preserves compact objects, then $(\Mod_\scC M)^\rmc=\Mod_{\scC^\rmc}M^\rmc$, where $M^\rmc\colon \scC^\rmc \to \scC^\rmc$ is the restriction of $M$ on $\scC^\rmc$.
\end{thm}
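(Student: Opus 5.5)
The first two assertions will come from Neeman's description of compact objects in a compactly generated category, applied to the candidate generators $F_M(\scC^\rmc)$. The last assertion will follow by checking that both inclusions hold, using the fact that in the separable case every module is a retract of a free module.

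\textbf{Triangulated structure and coproducts.} Since $\scC$ is compactly generated it has coproducts, hence is idempotent complete. So \Cref{thm:modules-triangulated} makes $\Mod_\scC M$ triangulated, with $F_M$ and $U_M$ triangulated and triangles detected in $\scC$. Because $M$ preserves coproducts, a family $(X_i,\alpha_i)$ has coproduct $(\coprod X_i,\coprod\alpha_i)$, since $M(\coprod X_i)\cong\coprod M(X_i)$. In particular $U_M$ preserves coproducts. A coproduct of triangles in $\scC$ is a triangle, so coproducts of triangles of modules are triangles. Consequently $\Mod_\scC M$ has coproducts and is again idempotent complete.

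\textbf{Compact generation and the compact part.} For $c\in\scC^\rmc$ we have $\Hom(F_Mc,\coprod(X_i,\alpha_i))\cong\Hom(c,\coprod X_i)\cong\coprod\Hom(c,X_i)$, so $F_M$ preserves compacts. If $\Hom(F_M c,\gS^n(X,\alpha))=\Hom(c,\gS^nX)=0$ for all $c\in\scC^\rmc$ and all $n$, then $X=0$ because $\scC^\rmc$ generates $\scC$. Thus the set (up to isomorphism) $F_M(\scC^\rmc)$ compactly generates $\Mod_\scC M$. Neeman's theorem then gives $(\Mod_\scC M)^\rmc=\thick(F_M(\scC^\rmc))$.

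\textbf{Case $M(\scC^\rmc)\subseteq\scC^\rmc$.} In this case $M^\rmc$ is a separable triangulated monad on the idempotent complete category $\scC^\rmc$. Hence $\Mod_{\scC^\rmc}M^\rmc$ is triangulated, and it is the full subcategory of $\Mod_\scC M$ on modules whose underlying object is compact. I will show the two inclusions.

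\begin{enumerate}[(i)]
\item \emph{Compact modules have compact underlying object.} The functor $U_M$ is triangulated and $U_MF_M=M$ maps $\scC^\rmc$ into $\scC^\rmc$. Therefore $U_M$ sends $\thick(F_M(\scC^\rmc))$ into $\thick(\scC^\rmc)=\scC^\rmc$.
\item \emph{Modules with compact underlying object are compact.} Here separability is used. The counit $\alpha\colon(MX,\mu_X)\to(X,\alpha)$ has the module section $M(\alpha)\circ\sigma_X\circ\eta_X$. One checks it is $M$-linear and a section of $\alpha$ using the identities of \Cref{defn:separable-monad} together with the module axioms. So $(X,\alpha)$ is a retract of $F_M(X)$, which is compact when $X$ is, and therefore $(X,\alpha)$ is compact.
\end{enumerate}

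The only genuinely non-formal step is verifying that this section is a morphism of modules. That is the main obstacle, and I would handle it by a diagram chase with $\sigma\circ\mu=M\mu\circ\sigma_M$ and naturality of $\sigma$.
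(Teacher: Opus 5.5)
Your proof is correct. The paper gives no argument for this statement: it is quoted from Balmer, with only the parenthetical remark that $F_M$ preserves compacts because $U_M$ preserves coproducts. Your route is the standard one:
\begin{enumerate}[(1)]
\item $F_M(\scC^\rmc)$ compactly generates, because $U_M$ preserves coproducts and detects zero objects;
\item Neeman's theorem then identifies the compacts as $\thick(F_M(\scC^\rmc))$;
\item $U_M^{-1}(\scC^\rmc)$ is thick, which gives one inclusion;
\item the split epimorphism $F_M(X)\to(X,\alpha)$ gives the other.
\end{enumerate}

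One correction concerns your plan for the only non-formal step. Showing that $s=M\alpha\circ\sigma_X\circ\eta_X$ is $M$-linear needs \emph{both} identities of Definition~\ref{defn:separable-monad}, not just $\sigma\circ\mu=M\mu\circ\sigma_M$ plus naturality. The chase is
\begin{align*}
s\circ\alpha &= M\alpha\circ\sigma_X\circ M\alpha\circ\eta_{MX} = M\alpha\circ M^2\alpha\circ\sigma_{MX}\circ\eta_{MX}\\
&= M\alpha\circ M\mu_X\circ\sigma_{MX}\circ\eta_{MX} = M\alpha\circ\sigma_X\circ\mu_X\circ\eta_{MX} = M\alpha\circ\sigma_X,\\
\mu_X\circ Ms &= M\alpha\circ\mu_{MX}\circ M\sigma_X\circ M\eta_X = M\alpha\circ\sigma_X\circ\mu_X\circ M\eta_X = M\alpha\circ\sigma_X .
\end{align*}
The first computation uses naturality of $\eta$ and $\sigma$, the module axiom $\alpha\circ M\alpha=\alpha\circ\mu_X$ under $M$, and $M\mu\circ\sigma_M=\sigma\circ\mu$. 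The second uses naturality of $\mu$ and the other identity $\mu_M\circ M\sigma=\sigma\circ\mu$.

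That second identity cannot be dropped. The map $\sigma=\eta_M$ is always a natural section of $\mu$ satisfying $M\mu\circ\sigma_M=\sigma\circ\mu$. For it, your $s$ reduces to $\eta_X$, which is $M$-linear only when $\eta_X\circ\alpha=\Id_{MX}$. In (ii) you do invoke ``the identities'' of the definition, so this is a matter of stating the chase correctly rather than a gap.
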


For ease of reference, we summarize the three aforementioned theorems.

\begin{summ}
\label[summ]{summ:summary}
Suppose $M\colon \scC\to \scC$ is a separable triangulated monad realized by an adjunction $\begin{tikzcd}[cramped,column sep=1em] F\colon\scC \rar[shift left] & \scD \rcolon G \lar[shift left]\end{tikzcd}$ of triangulated functors with $\scC$ and $\scD$ being idempotent complete. If $G$ is conservative, then the comparison functor $E\colon \scD \to \Mod_\scC M$ is an equivalence of triangulated categories. If $\scC$ is compactly generated, then $\Mod_\scC M$ is also compactly generated.
\end{summ}

\section{Ring objects}\label{sec:ring-objects}

Let $\scT=(\scT,\ot,1)$ be a tensor-triangulated category, i.e., a triangulated category equipped with a symmetric tensor product that is a triangulated functor in both variables. As in~\cite{BalmerFavi11}, we say that $\scT$ is \emph{rigidly-compactly generated} if $\scT$ is compactly generated, the tensor product preserves coproducts in both variables, the subcategory $\scT^\rmc$ of compact objects is a tensor-triangulated subcategory, so $1\in \scT^\rmc$ and $\scT^\rmc \ot \scT^\rmc \subseteq \scT^\rmc$, that coincides with the subcategory of rigid objects. We will not assume that $\scT$ is rigidly-compactly generated, unless stated explicitly.

A wide range of monads on a tensor-triangulated category is provided by ring objects, referred to as monoids in most classical sources, e.g.,~\cite{MacLane98}.

\begin{defn}
\label[defn]{defn:ring-object}
A \emph{ring object} in $\scT$ is a triple $A=(A,\mu,\eta)$ that consists of an object $A\in \scT$ and morphisms $\mu\colon A\ot A\to A$ and $\eta\colon 1\to A$ such that the triple $(A\ot -,\mu\ot -,\eta\ot -)$ is a monad. A ring object $A$ is called:
\begin{enumerate}[\rm(i)]
\item
\emph{commutative} if $\mu\circ (1\,2)=\mu$, where $(1\,2)$ is the swap of factors.
\item
\emph{separable} if $\mu\colon A\ot A\to A$ has a section $\sigma\colon A\to A\ot A$, with respect to which, $(A\ot-,\mu\ot-,\eta\ot-)$ is a separable monad.
\end{enumerate}
\end{defn}

\begin{rem}
\label[rem]{rem:sep-monad-not-sep-ring}
At this point, an important distinction needs to be made. In the definition of separable ring object, we require the multiplication map $\mu\colon A\ot A\to A$ to admit a section $\sigma\colon A\to A\ot A$ that renders the corresponding monad separable. If the monad $A\ot-$ is separable with section $\xi\colon A\ot-\to A\ot A\ot -$, then the morphism $\xi_1\colon A\to A\ot A$ is a section for $\mu\colon A\ot A\to A$ but it may not be the case that $\xi_A=\xi_1 \ot A$ and so, $A$ may not be a separable ring object (at least not with respect to $\xi_1$); see~\cite[Remark 2.10]{BalmerDellAmbrogioSanders15} for further discussion.
\end{rem}

\begin{rem}
\label[rem]{rem:modules-tensor}
If $A\in \scT$ is a commutative separable ring object and $\scT$ is idempotent complete, then $\Mod_\scT A$ admits a tensor structure that renders it a tensor-triangulated category such that the free module functor $F_A\colon \scT \to \Mod_\scT A$ is a tensor-triangulated functor~\cite{Balmer14}. We will discuss this in more detail in~\Cref{cons:action-of-module-cats}. When $\scT$ is rigidly-compactly generated, $\Mod_\scT A$ is also rigidly-compactly generated.
\end{rem}

\begin{ex}
\label[ex]{ex:ring-objects}
The following are some examples of ring objects.
\begin{enumerate}[\rm(a)]
\item
Let $F\colon \scT_1 \to \scT_2$ be a coproduct-preserving tt-functor between rigidly-compactly generated tt-categories and let $G\colon \scT_2\to \scT_1$ be its right adjoint and denote by $\varepsilon\colon FG\to \Id_{\scT_2}$ the counit of adjunction. By the projection formula~\cite{BalmerDellAmbrogioSanders16}
\[
G(F(X) \ot Y)\cong X\ot G(Y),\, \forall X\in \scT_1,\, \forall Y\in \scT_2,
\]
it follows that $GF\cong G(1)\ot -$. Then $G(1)$ is a commutative ring object with multiplication the natural map
\[
G(1)\ot G(1)\cong GFG(1)\xr{G\varepsilon_{1}} G(1).
\]
\item
If $A$ is a right idempotent in a rigidly-compactly generated tt-category $\scT$, meaning that there is a morphism $\eta\colon 1\to A$ such that $\sigma\coloneqq \eta \ot A\colon A\to A\ot A$ is an isomorphism, then $(A,\sigma^{-1},\eta)$ is a commutative separable ring object and $\Mod_\scT A=\scT/\Ker(A\ot -)$. By~\cite{BalmerFavi11}, every right idempotent $A$ occurs as the image $L_A(1)\in \scT$ for a tensor smashing localization functor $L_A\colon \scT\to \scT$. Hence, tensor smashing localizations are exactly the categories of modules over those ring objects whose multiplication is an isomorphism.
\item
Let $R$ be a commutative ring and let $A$ be a separable flat $R$-algebra. Then $A$ is a separable ring object in $\rmD(R)$. Moreover, $\Mod_{\rmD(R)}A$ is equivalent to $\rmD(A)$; see~\cite[Theorem 6.5]{Balmer11}.
\item
Let $X$ be a quasi-compact quasi-separated scheme. If $f\colon U\to X$ is a separated \'etale morphism, then $\rmR f_\ast \mcO_U$ is a commutative separable ring object in $\rmD_\mathrm{qc}(X)$ and it holds that $\Mod_{\rmD_\mathrm{qc}(X)}\rmR f_\ast \mcO_U\simeq \rmD_\mathrm{qc}(U)$; see~\cite[Theorem 3.5]{Balmer16}. If $X$ is a noetherian scheme, then all commutative separable ring objects in $\rmD_\mathrm{qc}(X)$ are of the form $\rmR f_\ast L \mcO_U$, for some \'etale morphism $f\colon U\to X$ and a tensor smashing localization functor $L\colon \rmD_\mathrm{qc}(U)\to \rmD_\mathrm{qc}(U)$; see~\cite[Theorem 7.10]{Neeman18}. The tensor smashing localizations of $\rmD_\mathrm{qc}(U)$ are all finite and correspond to specialization closed subsets $V\subseteq U$; see~\cite{AlonsoTarrioJeremiasLopezSoutoSalorio04,BalmerFavi11}. The situation is depicted in the following diagram:
\[
\begin{tikzcd}[column sep=2.58em, row sep=5em]
\arrow[dd,bend right,shift right=35pt,shorten <=-10pt] \arrow[dd,phantom,bend right,shift right=30pt,"\dashv"] \rmD_\mathrm{qc}(X) \dar[phantom,"\dashv"] \rar[phantom,"\perp"] \rar[shift left=5pt,"f^\ast"] \dar[shift right=5pt]
& \rmD_\mathrm{qc}(U) \dar[phantom,"\dashv"] \rar[phantom,"\perp"] \lar[shift left=5pt,"\rmR f_\ast"] \arrow[r,shift left=5pt,two heads] \dar[shift right=5pt] \dlar["\simeq"]
& \arrow[l,shift left=5pt,hook'] \rmD_\mathrm{qc}(U)/\Ker(L\mcO_U\ot^\rmL_U -) \dlar["\simeq"]
\\
\uar[shift right=5pt]\Mod_{\rmD_\mathrm{qc}(X)}\rmR f_\ast \mcO_U \dar[shift right=5pt,two heads] \dar[phantom,"\dashv"]
& \Mod_{\rmD_\mathrm{qc}(U)}L\mcO_U \dlar["\simeq"] \uar[shift right=5pt]
\\
\arrow[uu,bend left,shift left=25pt,shorten >=-10pt] \uar[shift right=5pt,hook] \Mod_{\rmD_\mathrm{qc}(X)}\rmR f_\ast L\mcO_U.
\end{tikzcd}
\]
\item
Let $G$ be a cyclic $p$-group and let $k$ be a separably closed field with $\Char(k)=p$. Then all finite-dimensional commutative separable ring objects in the stable category $\sMod kG$ are of the form $kX$, where $X$ is a finite $G$-set; see~\cite{BalmerCarlson18}.
\end{enumerate}
\end{ex}

\section{Monads via tensor-actions}\label{sec:monads-via-tensor-actions}

From now on, we will tacitly assume that all categories involved are idempotent complete. Let $\scT$ be a tensor-triangulated category and let $\scK$ be a triangulated category. An \emph{action} of $\scT$ on $\scK$ is a functor $\ast \colon \scT \times \scK \to \scK$ that is triangulated in each variable such that $(X\ot Y) \ast A\cong X \ast (Y \ast A)$ and $1\ast A\cong A$, $\forall X,Y\in \scT,\, \forall A\in \scK$; see~\cite{Stevenson13} for more details. When $\scT$ is rigidly-compactly generated and $\scK$ is compactly generated, we require the action to preserve coproducts in each variable.

One can readily verify that a separable ring object $A\in \scT$ gives rise to a separable triangulated monad $A\ast -\colon\scK\to \scK$ with multiplication, unit and section of multiplication given by acting with $\mu$, $\eta$ and $\sigma$, respectively. We denote by $\Mod_\scK A$ the category of modules over the monad $A\ast-\colon \scK\to \scK$. Applying the theorems presented in~\Cref{sec:monads-and-modules} in this setting, we have that $\Mod_\scK A$ is a triangulated category and if the monad $A\ast -\colon \scK\to \scK$ is realized by an adjunction of triangulated functors $\begin{tikzcd}[cramped,column sep=1em] F\colon\scK \rar[shift left] & \scD \lar[shift left] \rcolon G\end{tikzcd}$ such that $G$ is conservative, then the comparison functor $\scD \to \Mod_\scK A$ is an equivalence of triangulated categories. In case $\scT$ is rigidly-compactly generated and $\scK$ is compactly generated, the monad $A\ast -\colon \scK\to \scK$ preserves coproducts and $\Mod_\scK A$ is compactly generated; see~\Cref{summ:summary}.

\begin{cons}
\label[cons]{cons:action-of-module-cats}
If $A\in \scT$ is a commutative and separable ring object, then there is an action of $\Mod_\scT A$ on $\Mod_\scK A$ defined as follows. Let $(M,\alpha)\in \Mod_\scT A$ and $(N,\beta)\in \Mod_\scK A$. Then the morphism
\[
e\coloneqq M\ast N \cong M\ast 1 \ast N \xr{\eta} M\ast A \ast N \xr{\sigma} M\ast A\ast A\ast N \xr{\alpha\ast \beta}M\ast N
\]
is an idempotent, which therefore splits. We define $M\ast_A N=\im e$. The following diagram is a split coequalizer:
\[
\begin{tikzcd}
M\ast A \ast N \rar[shift left,"\alpha\ast N"] \rar[shift right,"M\ast \beta"'] & M\ast N \rar[two heads,"e"] & M \ast_A N.
\end{tikzcd}
\]
The $A$-module structure on $M\ast_A N$ is given by the unique map that makes the following diagram commute:
\[
\begin{tikzcd}[column sep=7em,row sep=4em]
M\ast A \ast A \ast N \rar["A\ast(\alpha\ast N - M\ast \beta)"] \dar["M\ast \mu \ast N"']& A\ast M\ast N \rar[two heads,"A \ast e"] \dar["\alpha\ast N - M\ast  \beta"] & \dar["\exists!"',dashed] A \ast (M \ast_A N)
\\
M\ast A \ast N \rar["\alpha\ast N - M\ast \beta"] & M\ast N \rar[two heads,"e"] & M \ast_A N.
\end{tikzcd}
\]
For $\scK=\scT$ and $\ast=\ot$, this is exactly how the tensor product on $\Mod_\scT A$ is defined in~\cite{Balmer14}.
This construction defines an action of $\Mod_\scT A$ on $\Mod_\scK A$ and the functors $F_\scT\colon \scT\to \Mod_\scT A$ and $F_\scK\colon \scK\to \Mod_\scK A$ satisfy the relation $F_\scK(X\ast E)=F_\scT(X)\ast_A F_\scK(E),\, \forall X\in \scT,\, \forall E\in \scK$. Alternatively, one can first define the action of $\Free_\scT A$ on $\Free_\scK A$ using this relation and then extend this action on the categories $\Mod_\scT A$ and $\Mod_\scK A$ via idempotent completion.
\end{cons}

\section{Separable algebras}\label{sec:separable-algebras}

In this section, we provide some relevant background on derived categories and singularity categories and proceed to prove~\Cref{thm:singularity-separable}, concerning singularity categories of separable algebras (cf.~\cite[Theorem 6.5]{Balmer11}).

Let $X$ be a noetherian scheme. It is well known that the category $\QCoh X$ of quasi-coherent sheaves is a locally noetherian Grothendieck abelian category and that direct sums of injective quasi-coherent sheaves are injective. We denote by $\rmD(X)=\rmD(\QCoh X)$ its derived category. It holds that $\rmD(X)$ is equivalent to the category $\rmD_{\mathrm{qc}}(\Mod \mcO_X)$ of complexes of sheaves with quasi-coherent cohomology; see~\cite[Tag 09T4]{Stacks26} and also the relevant discussion within the proof of~\cite[Theorem 7.13]{Positselski25}. Further, $\rmD(X)$ is a rigidly-compactly generated tensor-triangulated category and its subcategory of compact objects is $\rmD(X)^\rmc=\rmD^{\mathrm{perf}}(X)$ the subcategory of perfect complexes; see e.g.,~\cite[Examples 1.2]{BalmerFavi11}. Let $\rmK(X)=\rmK(\QCoh X)$ be the homotopy category of complexes of quasi-coherent sheaves and let $\rmK(\Inj X)$ denote its subcategory of complexes of injective quasi-coherent sheaves. We call the category $\rmS(X)=\rmK_{\mathrm{ac}}(\Inj X)$, consisting of acyclic complexes of injective quasi-coherent sheaves, the \emph{singularity category} of $X$. 

\begin{rec}
\label[rec]{rec:sing-recollement}
By the work of~\cite{Krause05}, the categories $\rmK(\Inj X)$ and $\rmS(X)$ are compactly generated and are related with $\rmD(X)$ via a recollement
\begin{equation}
\label{eq:recollement}    
\begin{tikzcd}
[column sep=5em]
\rmS(X) \rar[phantom,"\perp",shift left=1.8ex] \rar[hook] \rar[phantom,"\perp",shift right=1.8ex]
& \rmK(\Inj X) \lar["I_\lambda"',shift right=3.5ex] \lar["I_\rho",shift left=3.5ex] \rar[phantom,"\perp",shift left=1.8ex] \rar \rar[phantom,"\perp",shift right=1.8ex]
& \rmD(X) \lar["Q_\lambda"',shift right=3.5ex] \lar["Q_\rho",shift left=3.5ex]
\end{tikzcd}
\end{equation}
where the functor $\rmK(\Inj X)\to \rmD(X)$ is the composite $\rmK(\Inj X)\hookrightarrow \rmK(X)\twoheadrightarrow \rmD(X)$. The \emph{stabilization functor} $I_\lambda Q_\rho\colon \rmD(X)\to \rmS(X)$ induces an equivalence of triangulated categories $\rmD_{\mathrm{sg}}(X)^\natural\xr{\simeq} \rmS(X)^\rmc$ between the idempotent completion of the \emph{small singularity category} $\rmD_{\mathrm{sg}}(X)=\rmD^\rmb(\Coh X)/\rmD^{\mathrm{perf}}(X)$ and the subcategory of compact objects of $\rmS(X)$. These results were first established for noetherian separated schemes. Separatedness was used to ensure the equivalence $\rmD(X)\simeq \rmD_{\mathrm{qc}}(\Mod \mcO_X)$. As mentioned in the preceding discussion, separatedness turns out unnecessary. 
\end{rec}

\begin{rem}
\label[rem]{rem:abelian-sing-cats}
In~\cite{Krause05}, the recollement~\eqref{eq:recollement} is studied in the context of a locally noetherian Grothendieck abelian category $\scA$, subject to the condition that $\rmD(\scA)$ is compactly generated. In particular, the analogous results hold if we replace the scheme $X$ by a noetherian, not necessarily commutative, ring $A$.
\end{rem}

Let $E\in \rmD(X)$ and let $\wt{E}$ be a $\rmK$-flat resolution of $E$, i.e., $\wt{E}\ot_X -$ preserves acyclic complexes and $\wt{E}$ is quasi-isomorphic to $E$. By~\cite[Corollary 3.22]{Murfet07}, we can assume that $\wt{E}$ consists of flat quasi-coherent sheaves. The functor $\ast\colon\rmD(X)\times \rmS(X)\to \rmS(X)$ defined by $E\ast D=\wt{E}\ot_X D$ is an action of $\rmD(X)$ on $\rmS(X)$; see~\cite{Stevenson14}.

In the case where $X=\Spec R$ is a noetherian affine scheme and $A$ is a separable flat $R$-algebra, it holds that $A$ is a separable ring object in $\rmD(R)$ and the action $A\ot_R-\colon \rmS(R)\to \rmS(R)$ is a separable coproduct-preserving triangulated monad on $\rmS(R)$. According to~\Cref{summ:summary}, $\Mod_{\rmS(R)}A$ is a compactly generated triangulated category and if $\begin{tikzcd}[cramped,column sep=1em] F\colon\rmS(R)\rar[shift left] & \lar[shift left] \scD \rcolon G\end{tikzcd}$ is an adjunction of triangulated functors that realizes the monad $A\ot_R-\colon \rmS(R)\to \rmS(R)$ and $G$ is conservative, e.g., faithful, then the comparison functor $\scD\to \Mod_{\rmS(R)}A$ is an equivalence of triangulated categories.

Our theorem below identifies $\Mod_{\rmS(R)}A$ with $\rmS(A)$, when $A$ is a noetherian ring. The proof relies on a decomposition of a certain adjunction, so we first describe the relevant situation more generally for clarity.

\begin{rec}
\label[rec]{rec:unit-counit-of-composite}
Consider a sequence of adjunctions
\[
\begin{tikzcd}
\scC_1 \rar[shift left=5pt,"F_1"] \rar[phantom,"\perp"] & \scC_2 \rar[shift left=5pt,"F_2"] \rar[phantom,"\perp"] \lar[shift left=5pt,"G_1"] & \scC_3 \rar[shift left=5pt,"F_3"] \rar[phantom,"\perp"] \lar[shift left=5pt,"G_2"] & \scC_4 \lar[shift left=5pt,"G_3"]
\end{tikzcd}
\]
and let $\eta^i\colon \Id_{\scC_i} \to G_iF_i$ and $\varepsilon^i\colon F_iG_i\to \Id_{\scC_{i+1}}$ be the unit and the counit of the $F_i\dashv G_i$ adjunction, for $i=1,2,3$. Then the unit $\eta\colon \Id_{\scC_1}\to G_1G_2G_3F_3F_2F_1$ and the counit $\varepsilon\colon F_3F_2F_1G_1G_2G_3\to \Id_{\scC_4}$ of the $F_3F_2F_1\dashv G_1G_2G_3$ adjunction are expressed via the composites
\begin{align*}
\eta&=\Id_{\scC_1} \xr{\eta^1} G_1F_1 \xr{G_1(\eta^2_{F_1(-)})} G_1G_2F_2F_1 \xr{G_1G_2(\eta^3_{F_2F_1(-)})} G_1G_2G_3F_3F_2F_1,\\
\varepsilon&=F_3F_2F_1G_1G_2G_3 \xr{F_3F_2(\varepsilon^1_{G_2G_3(-)})} F_3F_2G_2G_3 \xr{F_3(\varepsilon^2_{G_3(-)})} F_3G_3 \xr{\varepsilon^3} \Id_{\scC_4}.
\end{align*}
\end{rec}

\begin{thm}
\label[thm]{thm:singularity-separable}
Let $R$ be a commutative noetherian ring and let $A$ be a separable flat $R$-algebra that is a noetherian ring (not necessarily finitely generated over $R$). There is an equivalence $E_S\colon \rmS(A)\xr{\simeq} \Mod_{\rmS(R)}A$ of triangulated categories between the singularity category of $A$ and the category of modules in $\rmS(R)$ over the monad $A\ot_R-\colon \rmS(R)\to \rmS(R)$.
\end{thm}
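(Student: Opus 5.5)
Following \Cref{summ:summary}, we construct an adjunction of triangulated functors $F\colon \rmS(R)\rightleftarrows \rmS(A)\rcolon G$ that realizes the monad $A\ot_R-$ and has $G$ conservative. Since both categories are compactly generated, hence idempotent complete, the comparison functor $E_S\colon\rmS(A)\to\Mod_{\rmS(R)}A$ is then an equivalence of triangulated categories.

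\textbf{The adjunction.} Let $\varphi\colon R\to A$ be the structure map. The right adjoint $G$ will be restriction of scalars $\varphi_\ast$ on complexes. Since $A$ is flat over $R$, the functor $A\ot_R-$ is exact. Its right adjoint $\Hom_R(A,-)$ therefore sends injective $R$-modules to injective $A$-modules. Restriction of scalars also preserves injectives: for an injective $A$-module $I$ we have $\Hom_R(-,I)\cong\Hom_A(A\ot_R-,I)$, which is exact by flatness of $A$. Hence $\varphi_\ast$ induces a functor $\rmK(\Inj A)\to\rmK(\Inj R)$, which preserves acyclicity and so restricts to $G\colon\rmS(A)\to\rmS(R)$.

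For the left adjoint we factor through the recollement \eqref{eq:recollement}, in the shape of \Cref{rec:unit-counit-of-composite}:
\[
\rmS(R)\hookrightarrow \rmK(\Inj R)\xr{\ A\ot_R-\ } \rmK(A)\xr{\ \iota_\lambda\ }\rmK(\Inj A)\xr{\ I_\lambda\ }\rmS(A).
\]
The right adjoints are, respectively, $I_\rho$ (or the inclusion), restriction of scalars, the inclusion $\rmK(\Inj A)\hookrightarrow\rmK(A)$, and the inclusion $\rmS(A)\hookrightarrow\rmK(\Inj A)$. Here $\iota_\lambda$ is the left adjoint of $\rmK(\Inj A)\hookrightarrow\rmK(A)$, which exists by Krause since $A$ is noetherian. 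Composing these right adjoints and restricting to $\rmS(A)$ gives exactly $G$.

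A more direct alternative is $F=I_\lambda\circ\iota_\lambda\circ(A\ot_R-)$. The key point is that for $D\in\rmS(R)$ the complex $A\ot_R D$ is already an acyclic complex of injective $A$-modules. Acyclicity holds because $A$ is flat. Injectivity is the delicate part: $A\ot_R I$ is an injective $R$-module, since $R$ is noetherian and flat modules are direct limits of finite free modules, and injectives over a noetherian ring are closed under direct limits. To get injectivity over $A$, use separability: $A\ot_R I$ is an $A$-direct summand of $A\ot_R A\ot_R I\cong \Hom_R(A,\,A\ot_R I)$-like coinduced modules. Concretely, the map $A\ot_R M\to M$ has an $A$-linear section induced by $\sigma$, so each $A$-module $M$ is a summand of $A\ot_R M$. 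I would first check that $A\ot_R M$ with $M=A\ot_R I$ is $A$-injective via $\Hom_A(-,A\ot_R J)$ adjunction arguments.

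If this works, $F=A\ot_R-$ is left adjoint to restriction on the nose. Then $GF=A\ot_R-$, with unit $\eta$ and multiplication $G\varepsilon F=\mu$, so the adjunction realizes the monad in the sense of \Cref{rec:adj-real}. Otherwise one computes the unit and counit via \Cref{rec:unit-counit-of-composite} and checks that the corrections $\iota_\lambda$ and $I_\lambda$ become identities on objects already in $\rmS(A)$.

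\textbf{Conservativity and conclusion.} $G$ is conservative, indeed faithful, since it does not change the underlying complex: a morphism in $\rmS(A)$ that is null-homotopic over $R$ need not be null over $A$, but faithfulness is not needed. It suffices that $G(X)=0$ implies $X=0$, i.e. that an acyclic complex of injective $A$-modules which is contractible over $R$ is contractible over $A$. This follows from separability: an $R$-contraction $h$ averaged via $\sigma(1)=\sum a_i\ot b_i$, namely $h'=\sum a_i h(b_i-)$, gives an $A$-linear contraction. Then \Cref{summ:summary} yields the result.

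The main obstacle is identifying the left adjoint and verifying that it realizes exactly the monad $A\ot_R-$. This requires controlling injectivity of $A\ot_R I$ over $A$ when $A$ is not finitely generated over $R$.
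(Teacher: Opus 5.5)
Your architecture matches the paper's: the adjunction $\lambda_A(A\ot_R-)\dashv\res$ between $\rmS(R)$ and $\rmS(A)$, conservativity of $\res$ via separability, then \Cref{summ:summary}. The gap is the step you flag yourself, namely that this adjunction realizes the monad $A\ot_R-$. Neither of your two routes establishes it.

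In the direct route, the identification $A\ot_R A\ot_R I\cong\Hom_R(A,A\ot_R I)$ is not available in general. It is the finitely generated projective (\'etale) situation of \Cref{rem:singularity-etale-algebra}. Your ``concrete'' version is circular: $A\ot_R(A\ot_R I)$ is again of the form $A\ot_R J$ with $J$ an injective $R$-module. So knowing that $M$ is an $A$-summand of $A\ot_R M$ only returns you to the original question.

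Your fallback presupposes that $A\ot_R E$ already lies in $\rmS(A)$, which is exactly what was left open. Without it you have no control over $\iota_\lambda$ and $I_\lambda$ on $A\ot_R E$, and you need an independent reason why $\res\lambda_A(A\ot_R E)\cong A\ot_R E$. The paper obtains this by proving $\res\lambda_A\cong\lambda_R\res$. Passing to right adjoints, this amounts to $\Hom_R(A,-)$ sending $\rmS(R)$ into $\rmS(A)$ when $A$ is flat, which the paper takes from Emmanouil. Then $\res\lambda_A(A\ot_R E)\cong\lambda_R(A\ot_R E)\cong A\ot_R E$, since $A\ot_R E\in\rmS(R)$. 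The bookkeeping of \Cref{rec:unit-counit-of-composite} then identifies the unit and the multiplication.

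That said, your direct route can be closed with the averaging trick you already use for contractions, and it then gives a shorter proof than the paper's. Let $M$ be an $A$-module that is injective over $R$, let $j\colon M\to N$ be an $A$-linear monomorphism, and choose an $R$-linear retraction $p$ of $j$. Write $\sigma(1)=\sum_i a_i\ot b_i$ and set $p'=\sum_i a_i\,p(b_i\,-)$.
\begin{itemize}
\item $p'$ is $A$-linear, because $\sum_i aa_i\ot b_i=\sum_i a_i\ot b_ia$.
\item $p'$ still satisfies $p'j=\Id_M$, because $j$ is $A$-linear and $\sum_i a_ib_i=1$.
\end{itemize}
Hence $M$ is $A$-injective. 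Apply this to $M=A\ot_R I$, which you correctly showed is $R$-injective via Lazard and the closure of injectives under direct limits over a noetherian ring. This gives $A\ot_R D\in\rmS(A)$ for every $D\in\rmS(R)$. So $A\ot_R-\colon\rmS(R)\to\rmS(A)$ is left adjoint to $\res$, simply by restricting the adjunction on homotopy categories to full subcategories. It realizes the monad on the nose, with no need for Emmanouil's result or the composite-adjunction bookkeeping.

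Finally, your aside that a morphism in $\rmS(A)$ which is null-homotopic over $R$ need not be null-homotopic over $A$ is wrong. Averaging a null-homotopy in the same way produces an $A$-linear one, so $\res$ is faithful. This is how the paper argues: the counit splits degreewise.
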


\begin{proof}
Since $A$ is flat, $A\ot_R- \colon \Mod R\to \Mod A$ is exact. Viewing $A$ as an object in $\rmD(R)$, its action on $\rmS(R)$ is thus given by $A\ot_R-\colon \rmS(R)\to \rmS(R)$. The functor $\res\colon \Mod A\to \Mod R$ is exact and preserves injectives. This yields the adjunction
\begin{equation}
\label{eq:singular-adj}
\begin{tikzcd}[column sep=5em]
\rmS(R)\rar["\lambda_A A\ot_R-",shift left=5pt] \rar[phantom,"\perp"] & \lar["\res",shift left=5pt] \rmS(A),
\end{tikzcd}
\end{equation}
where $\lambda_A\colon \rmK(A)\to \rmS(A)$ is left adjoint to the inclusion $\rmS(A)\hookrightarrow \rmK(A)$. We claim that adjunction~\eqref{eq:singular-adj} realizes the monad $A\ot_R-\colon \rmS(R)\to \rmS(R)$. Consider the square
\begin{equation}
\label{eq:lambda-res-commute}
\begin{tikzcd}[row sep=2.8em]
\rmK(A) \rar["\lambda_A"] \dar["\res"']& \rmS(A) \dar["\res"]
\\
\rmK(R) \rar["\lambda_R"] & \rmS(R).
\end{tikzcd}
\end{equation}
Since $A$ is flat, the functor $\Hom_R(A,-)\colon \Mod R\to \Mod A$ applied to an acyclic complex of injectives yields an acyclic complex of injectives; see~\cite[Proposition 3.1]{Emmanouil23} and~\cite[Proposition 2.8]{Verasdanis25}. Taking right adjoints, the square~\eqref{eq:lambda-res-commute} transposes to the commutative square
\[
\begin{tikzcd}[row sep=2.8em]
\rmK(A) & \rmS(A) \lar[hook']
\\
\rmK(R) \uar["\Hom_R(A{,}-)"]& \uar["\Hom_R(A{,}-)"'] \lar[hook'] \rmS(R).
\end{tikzcd}
\]
We conclude that square~\eqref{eq:lambda-res-commute} commutes, i.e., $\res\lambda_A \cong \lambda_R \res$. Hence, we have
\[
\res \lambda_A (A\ot_R E)\cong\lambda_R \res (A\ot_R E)=\lambda_R (A\ot_R E)\cong A\ot_R E,\, \forall E\in \rmS(R),
\]
with the last isomorphism because $A\ot_R E\in \rmS(R)$. So, $\res \lambda_A A\ot_R -\cong A\ot_R -$. Adjunction~\eqref{eq:singular-adj} can be decomposed as
\begin{equation}
\label{eq:decomp}
\begin{tikzcd}[column sep=3.5em]
\rmS(R)\rar[shift left=5pt,hook] \rar[phantom,"\perp"] & \rmK(R) \rar["A\ot_R-",shift left=5pt] \lar["\rho_R",shift left=5pt]\rar[phantom,"\perp"] & \lar["\res",shift left=5pt] \rmK(A) \rar[phantom,"\perp"] \rar["\lambda_A",shift left=5pt] & \rmS(A). \lar[shift left=5pt,hook']
\end{tikzcd}
\end{equation}
Let $\eta^\rho,\eta,\eta^\lambda,\varepsilon^{\rho},\varepsilon,\varepsilon^{\lambda}$ be the units and the counits of these adjunctions, respectively from left to right. Let $\ol{\eta}$ and $\ol{\varepsilon}$ be the unit and the counit of the adjunction~\eqref{eq:singular-adj}. Then it holds that $\ol{\eta}_E$ coincides with the unit map $E\to A\ot_R E$ and $\res \ol{\varepsilon}_{\lambda_A(A\ot_R E)}$ coincides with the multiplication map $A\ot_R A \ot_R E\to A\ot_R E$, for all $E\in \rmS(R)$. This is due to a combination of the following arguments:
\begin{enumerate}
\item
The unit $\ol{\eta}$ and the counit $\ol{\varepsilon}$ are expressed as composites involving $\eta^\rho,\eta,\eta^\lambda$ and $\varepsilon^{\rho},\varepsilon,\varepsilon^{\lambda}$, respectively, as described in~\Cref{rec:unit-counit-of-composite}.
\item
Since $E\in \rmS(R)$, it holds that $\eta^\rho_E$ and $\varepsilon^{\rho}_{\res \lambda_A(A\ot_R E)}$ are isomorphisms.
\item
The adjunction $A\ot_R \dashv \res$, as in the middle of~\eqref{eq:decomp}, realizes the monad $A\ot_R-\colon \rmK(R)\to \rmK(R)$.
\item
$\res \lambda_A A\ot_R-\cong A\ot_R -$, as discussed previously.
\end{enumerate}
We infer that adjunction~\eqref{eq:singular-adj} realizes the monad $A\ot_R-\colon \rmS(R)\to \rmS(R)$. Since $A$ is separable, the counit of the $A\ot_R-\dashv \res$ adjunction between module categories has a section. Since the counit of the $A\ot_R- \dashv \res$ adjunction between homotopy categories of complexes is defined degree-wise, it also has a section. In particular, this implies that the functor $\res\colon \rmK(A)\to \rmK(R)$ is faithful, which of course remains faithful when restricted to singularity categories. In conclusion, adjunction~\eqref{eq:singular-adj} realizes the monad $A\ot_R-\colon \rmS(R)\to \rmS(R)$ and the right adjoint $\res\colon \rmS(A)\to \rmS(R)$ is faithful. By the discussion preceding~\Cref{rec:unit-counit-of-composite}, the comparison functor $E_S\colon \rmS(A)\to \Mod_{\rmS(R)}A$ is an equivalence of triangulated categories.
\end{proof}

\begin{rem}
\label[rem]{rem:singularity-etale-algebra}
The proof of~\Cref{thm:singularity-separable} is noticeably more straightforward if one assumes that $A$ is an \'etale $R$-algebra, i.e., separable and finitely generated (hence necessarily projective) over $R$. In this case, the functors $A\ot_R-\colon \Mod R\to \Mod A$ and $\Hom_R(A,-)\colon \Mod R\to \Mod A$ are naturally isomorphic. Therefore, $A\ot -$ preserves injectives and the left adjoint in adjunction~\eqref{eq:singular-adj} is already $A\ot_R-$.
\end{rem}

Next we discuss the compatibility of the equivalences $E_{\rmD}\colon \rmD(A)\xr{\simeq} \Mod_{\rmD(R)}A$ and $E_{\rmS}\colon \rmS(A)\xr{\simeq} \Mod_{\rmS(R)}A$ with the various tensor-actions involved, when $A$ is commutative.

\begin{prop}
\label[prop]{prop:compatibility-of-actions}
Let $R$ be a commutative noetherian ring and let $A$ be a separable flat $R$-algebra that is a commutative noetherian ring. Let $F_\rmD\colon \rmD(R)\to \Mod_{\rmD(R)}A$ and $F_\rmS\colon \rmS(R)\to \Mod_{\rmS(R)}A$ be the free module functors. Then the actions
\begin{align*}
&\ast_R\colon\rmD(R)\times \rmS(R) \to \rmS(R),\\
&\ast_M\colon\Mod_{\rmD(R)}A \times \Mod_{\rmS(R)}A \to \Mod_{\rmS(R)}A,\\
&\ast_A\colon\rmD(A)\times \rmS(A) \to \rmS(A)
\end{align*}
are compatible with the equivalences $E_D$ and $E_S$: Let $X\in \rmD(R)$ and $Y\in \rmS(R)$. Then $E_S^{-1}(F_DX \ast_M F_SY)=E_S^{-1}F_S(X\ast_R Y)=E_D^{-1}F_DX \ast_A E_S^{-1}F_S Y$.
\end{prop}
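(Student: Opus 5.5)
The proof splits into two equalities, and the plan is to obtain each from an identity between objects together with a computation of how the relevant functors act.

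\emph{First equality.} By \Cref{cons:action-of-module-cats} we have $F_\scK(X\ast E)=F_\scT(X)\ast_A F_\scK(E)$. Taking $\scT=\rmD(R)$ and $\scK=\rmS(R)$ gives $F_DX\ast_M F_SY=F_S(X\ast_R Y)$ in $\Mod_{\rmS(R)}A$. Applying $E_S^{-1}$ yields the first equality, so nothing beyond the construction is needed here.

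\emph{Second equality: identifying the two objects.} First identify $E_S^{-1}F_S$ and $E_D^{-1}F_D$. The comparison functor satisfies $E\circ(\text{left adjoint})=F_M$; this is the universal property in \Cref{rec:free-forgetful-adj}. Hence $E_S^{-1}F_S\cong \lambda_A(A\ot_R-)$ on $\rmS(R)$, using the adjunction~\eqref{eq:singular-adj} from the proof of \Cref{thm:singularity-separable}. Similarly, $E_D^{-1}F_D\cong A\ot_R^{\rmL}-=A\ot_R-$ (flatness) from $\rmD(R)$ to $\rmD(A)$, by \cite[Theorem 6.5]{Balmer11}. The claim therefore reduces to a natural isomorphism in $\rmS(A)$:
\[
\lambda_A\bigl(A\ot_R(\wt X\ot_R Y)\bigr)\cong (A\ot_R\wt X)\ast_A \lambda_A(A\ot_R Y),
\]
where $\wt X$ is a $\rmK$-flat resolution of $X$ by flat $R$-modules. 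Note that $A\ot_R\wt X$ is then a $\rmK$-flat resolution of $A\ot_R X$ by flat $A$-modules, so the right-hand side equals $\lambda_A\bigl((A\ot_R\wt X)\ot_A\lambda_A(A\ot_R Y)\bigr)$. Indeed, the action $\ast_A$ on $\rmS(A)$ is $\wt E\ot_A-$ followed, if necessary, by landing in $\rmS(A)$; following \cite{Stevenson14}, tensoring a flat complex with an acyclic complex of injectives over a noetherian ring again gives such a complex, so no $\lambda_A$ is actually needed.

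\emph{The key step.} The left-hand side is $\lambda_A\bigl((A\ot_R\wt X)\ot_A(A\ot_R Y)\bigr)$. So it suffices to show that for a complex $P$ of flat $A$-modules with $\rmK$-flat behaviour, the functor $P\ot_A-\colon\rmK(A)\to\rmK(A)$ commutes with $\lambda_A$, i.e. $\lambda_A(P\ot_A Z)\cong P\ot_A\lambda_A Z$. I would prove this via the recollement~\eqref{eq:recollement} for $A$ (\Cref{rem:abelian-sing-cats}). The functor $\lambda_A$ is the left adjoint of the inclusion of $\rmS(A)$, defined on $\rmK(\Inj A)$ via $I_\lambda$ after replacing by injectives, and its kernel is the localizing subcategory generated by $Q_\lambda(\rmD(A))$, i.e. essentially by the images of complexes that are ``$\rmK$-projective-like'' in $\rmK(\Inj A)$. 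For $Z$ the triangle $Z'\to Z\to\lambda_AZ\to$ has $Z'$ in this kernel. Tensoring with $P$ gives a triangle with $P\ot_A\lambda_AZ\in\rmS(A)$. It remains to check that $P\ot_A Z'$ lies in the kernel of $\lambda_A$, that is, in ${}^\perp\rmS(A)$. Since $Z'$ is built from objects $Q_\lambda(C)$ with $C$ compact, I would use that $\rmD(A)$, hence the kernel, is a $\rmD(A)$-submodule under the action on $\rmK(\Inj A)$. This is the compatibility of Stevenson's action with the recollement, which I would quote from \cite{Stevenson14}, as that is where the action is shown to restrict to $\rmS(A)$ and to commute with $I_\lambda$.

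\emph{Main obstacle.} The hard part is this commutation of $\lambda_A$ with the action, together with keeping track of the $\rmK(A)$-level versus $\rmK(\Inj A)$-level definitions of $\lambda_A$. If quoting \cite{Stevenson14} is not enough, the fallback is the same adjunction-transposition trick used for square~\eqref{eq:lambda-res-commute}. For flat $P$, the right adjoint $\mathcal{H}om_A(P,-)$ preserves acyclic complexes of injectives (as in \cite{Emmanouil23}). Transposing then gives $\lambda_A(P\ot_A-)\cong\lambda_A(P\ot_A\lambda_A-)$, and the latter equals $P\ot_A\lambda_A-$ because that object already lies in $\rmS(A)$. Naturality in $X$ and $Y$ follows since all the isomorphisms are built from units and counits.
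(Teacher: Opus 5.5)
Your proposal is correct, and for the second equality it takes a different route from the paper. The paper works after restriction. It applies $\res$ to both sides and uses the commutation $\res\lambda_A\cong\lambda_R\res$ from the proof of the theorem, together with the fact that the relevant complexes already lie in $\rmS(R)$, to remove the $\lambda$'s. It then rearranges the tensor factors over $R$ and concludes by conservativity of $\res$. You instead prove directly in $\rmK(A)$ that $\lambda_A$ commutes with $P\ot_A-$ for $P=A\ot_R\wt X$. You do this by transposing the square formed by $\mathcal{H}om_A(P,-)$ and the inclusion $\rmS(A)\hookrightarrow\rmK(A)$, which is the same trick the paper uses for restriction, now applied to the tensor action.

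The paper's route is shorter and reuses facts it has already proved. Yours gives a natural isomorphism in $\rmS(A)$ and a more general statement, namely $\rmD(A)$-linearity of $\lambda_A$, which does not depend on $R$ or on separability. It also supplies something the paper's last step implicitly needs. Conservativity reflects invertibility of morphisms, not isomorphism of objects, so a comparison map in $\rmS(A)$ is required. The canonical map $\lambda_A(P\ot_A Z)\to P\ot_A\lambda_A Z$ is such a map, and your argument shows it is invertible.

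Three points to tighten.

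First, the paper invokes Emmanouil's result only for the flat module $A$. For the unbounded complex $P$ you need $\Hom_{\rmK(A)}(P,W[n])=0$ for all $W\in\rmS(A)$, so check that your citation covers complexes. Alternatively, sidestep the issue by choosing $\wt X$ to be $\rmK$-projective. Then $P$ is a $\rmK$-projective complex of projective $A$-modules, $\mathcal{H}om_A(P,W)$ is termwise injective, and $H^n\mathcal{H}om_A(P,W)=\Hom_{\rmD(A)}(P,W[n])=0$.

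Second, your first sketch via the recollement is imprecise. On $\rmK(A)$ the kernel of $\lambda_A$ is ${}^\perp\rmS(A)$, which contains objects outside $\rmK(\Inj A)$, for example $A$ itself. So the fallback should be the actual argument.

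Third, separability removes the obstacle entirely. An $A$-module $M$ that is injective over $R$ is, via the separability idempotent, an $A$-linear direct summand of the injective $A$-module $\Hom_R(A,M)$. The terms of $A\ot_R Y$ and of $A\ot_R\wt X\ot_R Y$ are $R$-injective, since $R$ is noetherian and $A$ is flat. Hence both complexes already lie in $\rmS(A)$ and $\lambda_A$ does nothing to them. Both sides of the second equality then reduce to $(A\ot_R\wt X)\ot_A(A\ot_R Y)$.
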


\begin{proof}
Since $A\in \rmD(R)$ is a commutative separable ring object, $\Mod_{\rmD(R)}A$ admits a tensor product which defines an action on $\Mod_{\rmS(R)}A$; see~\Cref{cons:action-of-module-cats}. Applying $E_S^{-1}$ to the equality $F_S(X\ast_R Y)=F_DX\ast_M F_SY$, we obtain the first equality claimed in the statement. Let $\wt{X}$ be a $\rmK$-flat resolution of $X$. Since $A$ is flat, it holds that $A\ot_R \wt{X}=\wt{A\ot_R X}$ is a $\rmK$-flat resolution of $A\ot_R X$. We have
\begin{align*}
\res E_S^{-1}F_S(X\ast_R Y)&=\res\lambda_A(A\ot_R \wt{X}\ot_R Y)\\
&=A\ot_R \wt{X}\ot_R Y\\
&=\wt{X}\ot_R A\ot_R Y\\
&=\wt{X}\ot_R \res\lambda_A(A\ot_R Y)\\
&=\res((A\ot_R \wt{X}) \ot_A \lambda_A(A\ot_R Y))\\
&=\res((\wt{A\ot_R X})\ot_A\lambda_A(A\ot_R Y))\\
&=\res (E_D^{-1}F_D X \ast_A E_S^{-1}F_S Y).
\end{align*}
Since $\res$ is conservative, we have $E_S^{-1}F_S(X\ast_R Y)=E_D^{-1}F_DX\ast_A E_S^{-1}F_SY$.
\end{proof}

\section{\'Etale morphisms}\label{sec:finite-etale-morphisms}

In this section, we prove a singularity category analogue of~\cite[Theorem 3.5]{Balmer16} for finite \'etale morphisms of noetherian schemes and we also treat the case of open immersions.

\begin{thm}
\label[thm]{thm:singularity-etale-schemes}
Let $f\colon U\to X$ be a finite \'etale morphism of noetherian schemes. There is an equivalence $E\colon \rmS(U)\xr{\simeq} \Mod_{\rmS(X)}\rmR f_\ast \mcO_U$ of triangulated categories between the singularity category of $U$ and the category of modules in $\rmS(X)$ over the monad $\rmR f_\ast \mcO_U \ot_X -\colon \rmS(X)\to \rmS(X)$.
\end{thm}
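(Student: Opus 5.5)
The plan is to mimic the proof of \Cref{thm:singularity-separable}, replacing $A\ot_R-\dashv\res$ by $f^\ast\dashv f_\ast$ on quasi-coherent sheaves, and then apply \Cref{summ:summary}.

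\emph{Step 1: an adjunction on homotopy categories.} Since $f$ is finite, it is affine, so $f_\ast\colon\QCoh U\to\QCoh X$ is exact. Since $f$ is étale, it is flat, so $f^\ast$ is exact. Hence both extend degreewise to an adjunction $f^\ast\dashv f_\ast$ between $\rmK(X)$ and $\rmK(U)$. Next:
\begin{itemize}
\item $f_\ast$ preserves injective quasi-coherent sheaves, because its left adjoint $f^\ast$ is exact.
\item Since $f$ is finite locally free, $f_\ast$ has a right adjoint $f^\times$. Locally, on $X=\Spec R$ and $U=\Spec A$ with $A$ finite projective and étale over $R$, this right adjoint is $\Hom_R(A,-)$, which by \Cref{rem:singularity-etale-algebra} is naturally isomorphic to $A\ot_R-$. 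Globally, finite étale gives $f^\times\cong f^\ast$, since the relative dualizing sheaf is trivial.
\end{itemize}
Consequently $f^\ast$ preserves injective quasi-coherent sheaves; this uses that on a noetherian scheme injectivity is local and that injectives are direct sums of indecomposables. The first point also shows that $f_\ast$ preserves acyclic complexes of injectives.

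\emph{Step 2: the adjunction on singularity categories.} Both $f^\ast$ and $f_\ast$ preserve acyclic complexes of injectives, so they restrict to an adjunction $f^\ast\dashv f_\ast$ between $\rmS(X)$ and $\rmS(U)$. No $\lambda$ or $\rho$ correction is needed, exactly as in \Cref{rem:singularity-etale-algebra}. I then need to identify the monad $f_\ast f^\ast$ on $\rmS(X)$ with the action $\rmR f_\ast\mcO_U\ot_X-$:
\begin{itemize}
\item Since $f$ is affine, $\rmR f_\ast\mcO_U=f_\ast\mcO_U$, which is a flat (indeed locally free) $\mcO_X$-module. It is therefore its own K-flat resolution, so the action is $f_\ast\mcO_U\ot_X D$ computed degreewise.
\item The projection formula $f_\ast f^\ast D\cong f_\ast\mcO_U\ot_X D$ holds degreewise and naturally.
\item Under this isomorphism, the unit and the multiplication $f_\ast\varepsilon f^\ast$ match $\eta\ot-$ and $\mu\ot-$ for the ring object $f_\ast\mcO_U$. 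This reduces to the same statement on $\QCoh$, which is the content of \Cref{ex:ring-objects}(d) and \cite[Theorem 3.5]{Balmer16}, applied degreewise.
\end{itemize}
Separability of the monad on $\rmS(X)$ follows because $f_\ast\mcO_U$ is a separable ring object, acting via $\ast$ as in \Cref{sec:monads-via-tensor-actions}.

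\emph{Step 3: conservativity and conclusion.} It remains to show that $f_\ast\colon\rmS(U)\to\rmS(X)$ is conservative. By separability, the counit $f^\ast f_\ast\to\Id$ on $\QCoh U$ admits a natural section. Such a section exists locally by \Cref{thm:singularity-separable}'s argument, and it glues because it is canonical: it is induced by the separability idempotent of $f_\ast\mcO_U$, which is unique for a commutative separable algebra. Applied degreewise, this section makes $f_\ast$ faithful on $\rmK(U)$, and hence faithful and conservative on $\rmS(U)$. \Cref{summ:summary} then gives that the comparison functor $E$ is a triangulated equivalence.

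The main obstacle is Step 1: showing that $f^\ast$ preserves injective quasi-coherent sheaves globally, via $f^\times\cong f^\ast$ for finite étale $f$. If the global identification is awkward, I would check injectivity affine-locally. On a noetherian scheme, a quasi-coherent sheaf is injective if it is injective on an affine cover, and on affines the claim is $A\ot_R I\cong\Hom_R(A,I)$, as in \Cref{rem:singularity-etale-algebra}.
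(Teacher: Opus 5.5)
Your proposal is correct and follows essentially the same route as the paper. You restrict the sheaf-level adjunction $f^\ast\dashv f_\ast$ (both functors exact and preserving injective quasi-coherent sheaves) to $\rmS(X)$ and $\rmS(U)$, identify the monad via the sheaf-level projection formula, get faithfulness of $f_\ast$ from a degreewise section of the counit, and conclude with \Cref{summ:summary}. The only differences are in how two ingredients are justified. The paper gets that $f^\ast$ preserves injectives from the derived identity $f^!=f^\ast$ (via monadicity and Grothendieck--Neeman duality results), and it obtains the counit section on $\QCoh U$ by restricting the derived one. You instead use the trace-form identification $f^\times\cong f^\ast$ (or an affine-local check $A\ot_R-\cong\Hom_R(A,-)$), and you glue local counit sections using uniqueness of the separability idempotent, which is more elementary and equally valid.
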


\begin{proof}
Since $f$ is a flat morphism, the functor $f^\ast\colon \QCoh X\to \QCoh U$ is exact and its right adjoint $f_\ast\colon \QCoh U\to \QCoh X$ preserves injective quasi-coherent sheaves and is also exact because $f$ is finite. It holds that $\rmR f_\ast=f_\ast$ and in particular, $\rmR f_\ast \mcO_U=f_\ast \mcO_U$. Consider the derived adjoint triple
\[
\begin{tikzcd}[column sep=3em]
\rmD(X) \rar[shift left=3.5ex,"f^\ast"] \rar[shift left=2ex,phantom,"\perp"] \rar[shift right=2ex,phantom,"\perp"] \rar[shift right=3.5ex,"f^!"']& \lar["f_\ast"description] \rmD(U).
\end{tikzcd}
\]
We know by~\cite[Theorem 3.5]{Balmer16} that the adjunction $f^\ast \dashv f_\ast$ is monadic, so the functor $f_\ast\colon \rmD(U)\to \rmD(X)$ is identified with the forgetful functor $\Mod_{\rmD(X)}f_\ast f^\ast\to \rmD(X)$, therefore $f_\ast$ preserves compact objects. Combining~\cite[Proposition 3.2]{BalmerDellAmbrogioSanders16} with~\cite[Theorem 4.8]{Sanders22}, we deduce that $f^!=f^\ast$. Hence, $f^\ast \colon \QCoh X\to \QCoh U$ preserves injective quasi-coherent sheaves. Thus, we have an adjunction
\begin{equation}
\label{eq:etale-sing-adj}
\begin{tikzcd}
\rmS(X) \rar[shift left=5pt,"f^\ast"] \rar[phantom,"\perp"] & \lar[shift left=5pt,"f_\ast"] \rmS(U).
\end{tikzcd}
\end{equation}
Recall from~\Cref{ex:ring-objects} that $f_\ast \mcO_U\in \rmD(X)$ is a commutative separable ring object. Since $f_\ast$ is exact, $f_\ast \mcO_U\in \QCoh X$ is a flat quasi-coherent sheaf. Therefore, the action of $f_\ast \mcO_U$ on $\rmS(X)$ yields the separable monad $f_\ast \mcO_U \ot_X - \colon \rmS(X)\to \rmS(X)$. Since the derived adjunction
\begin{equation}
\label{eq:derived-adj}
\begin{tikzcd}
\rmD(X) \rar[shift left=5pt,"f^\ast"] \rar[phantom,"\perp"] & \lar[shift left=5pt,"f_\ast"] \rmD(U)
\end{tikzcd}
\end{equation}
satisfies the projection formula
\[
f_\ast(f^\ast E\ot_U^\rmL D)\cong E\ot^\rmL_X f_\ast D,\, \forall E\in \rmD(X),\, \forall D\in \rmD(U)
\]
and $f^\ast$ and $f_\ast$ are exact functors, the projection formula holds at the level of quasi-coherent sheaves. As a result, the monad $f_\ast \mcO_U\ot_X -\colon \rmS(X)\to \rmS(X)$ is realized by the adjunction~\eqref{eq:etale-sing-adj}. Since the counit of the derived adjunction~\eqref{eq:derived-adj} has a section, the counit of adjunction on quasi-coherent sheaves has a section, due to exactness of $f^\ast$ and $f_\ast$. It follows that the functor $f_\ast\colon \rmS(U)\to \rmS(X)$ is faithful. We conclude that the comparison functor $E\colon \rmS(U)\to \Mod_{\rmS(X)}f_\ast \mcO_U$ is an equivalence of triangulated categories; see~\Cref{summ:summary}.
\end{proof}

\begin{cor}
\label[cor]{cor:small-singularity-etale-schemes}
Let $f\colon U\to X$ be a finite \'etale morphism of noetherian schemes. There is an equivalence $E_{\mathrm{sg}}\colon \rmD_{\mathrm{sg}}(U)^\natural\xr{\simeq} \Mod_{\rmD_{\mathrm{sg}}(X)^\natural}\rmR f_\ast \mcO_U$ of triangulated categories.
\end{cor}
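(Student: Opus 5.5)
We deduce the corollary from \Cref{thm:singularity-etale-schemes} by passing to compact objects, using \Cref{thm:modules-cg} and \Cref{rec:sing-recollement}.

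The plan has three steps.

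\textbf{Step 1: the monad is coproduct-preserving and separable.} Recall that $\rmS(X)$ and $\rmS(U)$ are compactly generated. The monad $M=f_\ast\mcO_U\ot_X-\colon \rmS(X)\to \rmS(X)$ is separable. It preserves coproducts, because tensoring with the flat quasi-coherent sheaf $f_\ast\mcO_U$ commutes with degreewise direct sums. Hence \Cref{thm:modules-cg} gives
\[
(\Mod_{\rmS(X)}M)^\rmc=\thick(F_M(\rmS(X)^\rmc)).
\]

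\textbf{Step 2: $M$ preserves compact objects.} Since $M\cong f_\ast f^\ast$ on $\rmS(X)$, it suffices to check that both $f^\ast$ and $f_\ast$ preserve compacts between $\rmS(X)$ and $\rmS(U)$.

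For $f^\ast\colon\rmS(X)\to\rmS(U)$: its right adjoint $f_\ast$ preserves coproducts, since $f_\ast$ is exact on quasi-coherent sheaves and commutes with direct sums. Therefore $f^\ast$ preserves compacts.

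For $f_\ast\colon\rmS(U)\to\rmS(X)$: its right adjoint, the restriction of $f^!=f^\ast$ (shown in the proof of \Cref{thm:singularity-etale-schemes}), preserves coproducts. Therefore $f_\ast$ also preserves compacts.

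The last part of \Cref{thm:modules-cg} then gives
\[
(\Mod_{\rmS(X)}M)^\rmc=\Mod_{\rmS(X)^\rmc}M^\rmc.
\]

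\textbf{Step 3: assemble the equivalence.} The equivalence $E\colon\rmS(U)\xr{\simeq}\Mod_{\rmS(X)}M$ of \Cref{thm:singularity-etale-schemes} restricts to an equivalence of compact parts,
\[
\rmS(U)^\rmc\simeq \Mod_{\rmS(X)^\rmc}M^\rmc .
\]
By \Cref{rec:sing-recollement}, the stabilization functor identifies $\rmD_{\mathrm{sg}}(U)^\natural\simeq\rmS(U)^\rmc$ and $\rmD_{\mathrm{sg}}(X)^\natural\simeq\rmS(X)^\rmc$.

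It remains to see that, under the second identification, $M^\rmc$ corresponds to the monad $f_\ast\mcO_U\ot_X-$ on $\rmD_{\mathrm{sg}}(X)^\natural$ induced by the action of $\rmD(X)$. This holds because the stabilization functor $I_\lambda Q_\rho$ is compatible with the $\rmD(X)$-action (the action on $\rmS(X)$ is defined via flat resolutions, and $f_\ast\mcO_U$ is already flat). Transporting the module categories along this identification gives $E_{\mathrm{sg}}$. It is triangulated because the triangles on both sides are detected in the underlying category, by \Cref{thm:modules-triangulated}.

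\textbf{Main obstacle.} I expect the main difficulty to be Step 2, namely justifying that $f^\ast$ and $f_\ast$ restrict to compact objects of the singularity categories. The key input is that both functors are exact on quasi-coherent sheaves and commute with coproducts, so that each has a coproduct-preserving right adjoint at the level of $\rmS$. This needs the identification $f^!=f^\ast$ from the proof of \Cref{thm:singularity-etale-schemes}.

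An alternative route to the same conclusion: $f^\ast$ and $f_\ast$ preserve $\rmD^\rmb(\Coh)$ and $\rmD^{\mathrm{perf}}$ (using that $f$ is finite and flat), and they commute with stabilization.
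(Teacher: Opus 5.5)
Your proposal is correct and follows the paper's proof: restrict the equivalence of \Cref{thm:singularity-etale-schemes} to compact objects, identify $(\Mod_{\rmS(X)}f_\ast\mcO_U)^\rmc$ with $\Mod_{\rmS(X)^\rmc}f_\ast\mcO_U$ via \Cref{thm:modules-cg}, and transport along the stabilization equivalences $\rmD_{\mathrm{sg}}(-)^\natural\simeq\rmS(-)^\rmc$. The differences are minor. You check that the monad preserves compacts via the two-sided adjunction $f^\ast\dashv f_\ast\dashv f^\ast$ on $\rmS$, and you match the monads through compatibility of stabilization with tensoring by $f_\ast\mcO_U$; that compatibility holds because $f_\ast\mcO_U$ is locally free of finite rank, not merely because it is flat. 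The paper instead gets both facts from $f^\ast$ and $f_\ast$ preserving $\rmD^\rmb(\Coh)$ and perfect complexes and commuting with stabilization (Stevenson's Lemma 5.4), which is the alternative you mention at the end.
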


\begin{proof}
Again, note that $\rmR f_\ast \mcO_U=f_\ast \mcO_U$ due to exactness of $f_\ast$. The functors $f^\ast\colon \rmD^\rmb(\Coh X)\to \rmD^\rmb(\Coh U)$ and $f_\ast\colon \rmD^\rmb(\Coh U) \to \rmD^\rmb(\Coh X)$ preserve perfect complexes. Hence, we have an induced adjunction
\[
\begin{tikzcd}
\rmD_{\mathrm{sg}}(X)^\natural \rar[shift left=5pt,"f^\ast"] \rar[phantom,"\perp"] & \lar[shift left=5pt,"f_\ast"] \rmD_{\mathrm{sg}}(U)^\natural.
\end{tikzcd}
\]
We know by~\cite[Theorem 4.2]{Balmer16}, recalled in~\Cref{thm:modules-cg}, that $(\Mod_{\rmS(X)}f_\ast \mcO_U)^\rmc=\Mod_{\rmS(X)^\rmc}f_\ast \mcO_U$. Now restrict the equivalence $E\colon \rmS(U)\to \Mod_{\rmS(X)}f_\ast \mcO_U$, established in~\Cref{thm:singularity-etale-schemes}, on the subcategories of compact objects and note that, by~\cite[Lemma 5.4]{Stevenson14}, the functors $f_\ast$ and $f^\ast$ restricted on bounded complexes commute with the stabilization functors; see also the discussion preceding~\cite[Proposition 8.7]{Stevenson14}. We obtain the following commutative diagram providing the claimed equivalence:
\[
\begin{tikzcd}[/tikz/baseline=(\tikzcdmatrixname-\the\pgfmatrixcurrentrow-1.base)]
\rmS(U) \rar["\simeq"] & \Mod_{\rmS(X)}f_\ast \mcO_U
\\
\rmS(U)^\rmc \uar[hook] \rar["\simeq"] & \Mod_{\rmS(X)^\rmc}f_\ast \mcO_U \uar[hook]
\\
\rmD_{\mathrm{sg}}(U)^\natural \uar["\simeq"] \rar["E_{\mathrm{sg}}"',"\simeq"] & \Mod_{\rmD_{\mathrm{sg}}(X)^\natural}f_\ast \mcO_U. \uar["\simeq"'] 
\end{tikzcd}
\qedhere
\]
\end{proof}

\begin{rem}
The equivalence $E_{\mathrm{sg}}\colon \rmD_{\mathrm{sg}}(U)^\natural\xr{\simeq} \Mod_{\rmD_{\mathrm{sg}}(X)^\natural} \rmR f_\ast \mcO_U$, of~\Cref{cor:small-singularity-etale-schemes}, was independently obtained in recent work of~\cite{KarakikesKostas26} for proper schemes over a commutative noetherian ring. Their method utilizes the theory of intrinsic subcategories developed in~\cite{KostasPsaroudakisVitoria25}, via which they also obtain similar results for proper dg algebras and connective dg algebras in an equivariant context.
\end{rem}

\begin{prop}
\label[prop]{prop:open-immersion}
Let $f\colon U\to X$ be an open immersion of noetherian schemes. There is an equivalence $E\colon \rmS(U)\xr{\simeq} \Mod_{\rmS(X)}\rmR f_\ast \mcO_U$ of triangulated categories.
\end{prop}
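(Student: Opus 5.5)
The plan is to imitate the proof of \Cref{thm:singularity-etale-schemes}. The key difference is that now $f$ is not finite, so $f_\ast\colon \QCoh U\to \QCoh X$ is no longer exact. The compensating feature is that $\rmR f_\ast \mcO_U$ is a right idempotent in $\rmD(X)$: it is the smashing localization $L\mcO_X$ associated with the closed complement $Z=X\setminus U$. So the monad $M\coloneqq \rmR f_\ast\mcO_U\ast-\colon \rmS(X)\to \rmS(X)$ has invertible multiplication, and is therefore separable with $\sigma=\mu^{-1}$. By \Cref{ex:ring-objects}(b) and the general theory, $\Mod_{\rmS(X)}M$ is then identified with the essential image of $M$, that is, with $\rmS(X)/\Ker(M)$. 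It then suffices to exhibit an adjunction $f^\ast\dashv f_\ast$ between $\rmS(X)$ and $\rmS(U)$ that realizes $M$ and has conservative right adjoint, and to invoke \Cref{summ:summary}.

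\textbf{Step 1: the adjunction on singularity categories.} The restriction $f^\ast\colon \QCoh X\to \QCoh U$ is exact. Its right adjoint $f_\ast$ preserves injectives. I would also use that, on a noetherian scheme, every injective quasi-coherent sheaf decomposes as $I\cong\bigoplus_x J(x)^{(\kappa_x)}$ into indecomposable injectives $J(x)$. Consequently $f^\ast I\cong\bigoplus_{x\in U}J(x)^{(\kappa_x)}$ is injective, and $f_\ast f^\ast I$ is the direct summand of $I$ given by the points of $U$. Thus $f^\ast$ and $f_\ast$ induce an adjunction $\rmK(\Inj X)\rightleftarrows \rmK(\Inj U)$ with $f^\ast f_\ast\cong \Id$, and $f^\ast$ clearly preserves acyclicity. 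The delicate point, which I expect to be the main obstacle, is that $f_\ast$ sends acyclic complexes of injectives to acyclic complexes, since $f_\ast$ is not exact on all of $\QCoh U$. My first attempt would be to cite Krause's and Stevenson's treatment of the localization sequence $\Gamma_Z\rmS(X)\to \rmS(X)\to \rmS(U)$ (\cite{Stevenson14}), where this is established and $f_\ast\colon \rmS(U)\to \rmS(X)$ is shown to be fully faithful with essential image the $L$-local objects. Failing that, I would argue directly: for $E\in\rmS(U)$ the sections of $f_\ast E$ over an affine open $V$ are $\Gamma(U\cap V,E)$, and injective quasi-coherent sheaves are $\Gamma$-acyclic on the noetherian scheme $U\cap V$, so the complex of sections is exact.

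\textbf{Step 2: realizing the monad.} The projection formula $\rmR f_\ast(f^\ast F\ot E)\cong F\ot \rmR f_\ast E$ holds in $\rmD$. I would check it degreewise on $\rmK(\Inj)$: for $F$ flat quasi-coherent and $I$ injective, both $f_\ast(f^\ast F\ot f^\ast I)$ and $F\ot f_\ast f^\ast I$ can be compared via the canonical map. Equivalently, one can use Stevenson's identification of the action of $\rmR f_\ast\mcO_U$ on $\rmS(X)$ with the localization onto $\rmS(U)$. This gives $f_\ast f^\ast\cong \rmR f_\ast\mcO_U\ast-$ on $\rmS(X)$, compatibly with the units; the multiplications agree because both are the inverse of the unit isomorphism. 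The compatibility of units and multiplications then follows exactly as in the proof of \Cref{thm:singularity-separable}, using \Cref{rec:unit-counit-of-composite}.

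\textbf{Step 3: conclusion.} The counit $f^\ast f_\ast\to\Id_{\rmS(U)}$ is an isomorphism, and in particular has a section. Hence $f_\ast\colon\rmS(U)\to\rmS(X)$ is fully faithful, and therefore conservative. By \Cref{summ:summary} (or directly by \Cref{thm:modules-triangulated}), the comparison functor $E\colon \rmS(U)\to \Mod_{\rmS(X)}\rmR f_\ast\mcO_U$ is an equivalence of triangulated categories.
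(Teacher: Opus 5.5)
Your proposal is correct and follows essentially the paper's route. You get the adjunction $f^\ast\dashv f_\ast$ on singularity categories from Krause, with $f_\ast$ fully faithful because $f^\ast f_\ast\cong\Id$; you then use Stevenson's identification of the associated localization with the action of the right idempotent $\rmR f_\ast\mcO_U$, and conclude by monadicity. One caution about your fallback arguments. First, $\Gamma$-acyclicity of the terms alone does not make $\Gamma(U\cap V,-)$ exact on an unbounded acyclic complex; you also need finite cohomological dimension (e.g.\ from a finite affine \v{C}ech cover of $U\cap V$) to dimension-shift along the cycles. Second, the degreewise projection formula only shows that $f_\ast f^\ast$ commutes with the $\rmD(X)$-action, whereas $f_\ast f^\ast D\cong\rmR f_\ast\mcO_U\ast D$ additionally requires $\rmR f_\ast\mcO_U\ast-$ to kill $\Ker(f^\ast)$. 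That is exactly what Krause's $\Ker(f^\ast)=\rmS_Z(X)$ combined with Stevenson's $\rmS_Z(X)=\rmD_Z(X)\ast\rmS(X)$ provides, so the Stevenson citation is the necessary ingredient rather than an equivalent alternative.
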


\begin{proof}
Let $Z=X\setminus U$ and $\rmS_Z(X)=\{C\in \rmS(X)\mid \Supp(C)\subseteq Z\}$. The functor $f^\ast\colon \QCoh X\to \QCoh U$ preserves injectives and we obtain the adjunction
\[
\begin{tikzcd}
\rmS(X) \rar[shift left=5pt,"f^\ast"] \rar[phantom,"\perp"] & \lar[shift left=5pt,"f_\ast"] \rmS(U),
\end{tikzcd}
\]
which identifies $\rmS(U)$ with the localization $\rmS(X)/\rmS_Z(X)$ and $f^\ast$ with the quotient functor. All of this is explained in~\cite[Section 6]{Krause05}. By~\cite[Lemma 7.3]{Stevenson14}, $f^\ast$ commutes with the $\rmD(X)$-action. Further, $\rmD_Z(X)=\{C\in \rmD(X)\mid \Supp(C)\subseteq Z\}$ is a smashing tensor-ideal of $\rmD(X)$ and $\rmS_Z(X)=\rmD_Z(X)\ast \rmS(X)$ by~\cite[Corollary 4.11]{Stevenson13}. It follows that the associated localization functor is $\wt{\rmR f_\ast \mcO_U} \ot_X -\colon\rmS(X)\to \rmS(X)$, since it is given by acting with the right idempotent of $\rmD_Z(X)$, and we conclude that $f_\ast f^\ast=\wt{\rmR f_\ast \mcO_U}\ot_X - \colon \rmS(X)\to \rmS(X)$. It follows that the comparison functor $E\colon \rmS(U)\to \Mod_{\rmS(X)}\rmR f_\ast \mcO_U$ is an equivalence of triangulated categories.
\end{proof}

\begin{rem}
The analogous statement, as in~\Cref{prop:compatibility-of-actions}, also holds for finite \'etale morphisms and open immersions of noetherian schemes. We leave the details to the reader.
\end{rem}

\end{document}